\documentclass[11pt]{article}
\usepackage[a4paper,margin=1.15in]{geometry}
\usepackage[T1]{fontenc}
\usepackage[utf8]{inputenc}
\usepackage{amsmath,amssymb,amsthm,mathtools}
\usepackage{microtype}
\usepackage{enumitem}
\usepackage[numbers,sort&compress]{natbib}
\usepackage[colorlinks=true,linkcolor=blue,citecolor=blue,urlcolor=blue]{hyperref}
\usepackage[nameinlink,noabbrev]{cleveref}
\mathtoolsset{showonlyrefs}

\newtheorem{theorem}{Theorem}[section]
\newtheorem{lemma}[theorem]{Lemma}

\theoremstyle{definition}

\newcommand{\R}{\mathbb R}
\newcommand{\E}{\mathbb E}
\newcommand{\Pp}{\mathbb P}

\newcommand{\dist}{\operatorname{dist}}
\newcommand{\cx}{\mathrm{cx}}

\newcommand{\ip}[2]{\left\langle #1,#2\right\rangle}

\newcommand{\clB}[1]{\overline B(0,#1)}
\DeclareMathOperator{\argmax}{arg\,max}

\title{\textbf{A constructive solution to Talagrand's Gaussian convexification problem.}}
\author{Othmane Mazhar\\
\small Institute Louis Bachelier --- Fondation du Risque, Paris, France\\
\small \texttt{othmane.xx90@gmail.com}}
\date{}

\begin{document}
\maketitle

\begin{abstract}
Talagrand asked for a construction of a large convex subset of a bounded Minkowski sum of a large Gaussian set. We first prove a stronger nonsymmetric existential statement: if $A\subset\R^n$ is measurable and $\gamma_n(A)>2/3$, then $A+A+A$ contains a compact convex set of Gaussian measure at least $1/2$. Let now $A$ be closed with $\gamma_n(A)\ge7/8$, let $\Phi$ be the standard Gaussian distribution function, and put $a_0=\Phi^{-1}(3/4)$. For every $\Lambda>1$ and every $0<p<2\Phi(\Lambda a_0)-1$ we construct a centrally symmetric finite polytope $C\subset\Lambda(A+A+A)$ with $\gamma_n(C)\ge p$. Thus measure $3/4$ is obtained for every $\Lambda>1.705510\ldots$. We give matching upper and lower bounds for the optimal high-measure dilation and show that the dilation profile used by the construction is sharp among all symmetric half-measure cores. Finally, if $\gamma_n(A)\ge5/6+\eta$, we construct, for every $0<\varepsilon\le1/2$, a centered ellipsoid $E_\varepsilon$ with $\gamma_n(E_\varepsilon)\ge1-\varepsilon$ and
\begin{equation}
\frac{c}{\Phi^{-1}(1-\varepsilon/2)}\sqrt{\frac{\log n}{n}}\,E_\varepsilon\subset A+A+A.
\end{equation}
Both the dimension dependence and the dependence on $\varepsilon$ are optimal up to constants. The construction also gives a six-summand theorem for balanced sets and a second finite construction based on subgaussian tests.
\end{abstract}

\medskip
\noindent\textbf{Keywords.} Gaussian measure, convex order, Minkowski sum, convexity, Talagrand's convexity problem, convex optimization.

\smallskip
\noindent\textbf{2020 Mathematics Subject Classification.} 60E15, 52A20, 90C22, 90C25.

\section{Introduction}

Throughout, $\gamma_n$ denotes the standard Gaussian measure on $\R^n$, and $\Phi$ denotes the standard one-dimensional Gaussian distribution function. The convexifying effect of repeated Minkowski addition is a classical theme in convex geometry. If $A\subset\R^n$ is compact and $A^{(k)}=A+\cdots+A$ denotes its $k$-fold Minkowski sum, then the normalized sums $k^{-1}A^{(k)}$ approach $\operatorname{conv}(A)$ as $k\to\infty$. This phenomenon goes back to the Shapley--Folkman--Starr principle and to the work of Emerson and Greenleaf \cite{Starr1969,EmersonGreenleaf1969}; see \cite{FradeliziMadimanMarsigliettiZvavitch2018} for a systematic account of quantitative convexification under Minkowski addition. These results concern the geometry of the whole sumset and typically measure its distance from the convex hull by a dimension-dependent index of nonconvexity.

Talagrand proposed a different problem in which dimension dependence is forbidden. In \cite{Talagrand1995}, and later in related formulations \cite{Talagrand2010,Talagrand2026}, he asked whether a bounded number of Minkowski additions must already reveal a convex subset which is large for Gaussian measure, with the number of additions and the measure threshold independent of the ambient dimension. The conclusion is deliberately weaker than ordinary convexification: the whole sumset need not become convex, or even close to convex; one asks only for a large convex core inside it. The distinction is essential in high dimension. Gaussian measure has dimension-free concentration and isoperimetric properties, beginning with the Gaussian isoperimetric inequality of Sudakov--Tsirelson and Borell \cite{SudakovTsirelson1974,Borell1975}; see \cite{Ledoux2001} for background. Talagrand's question asks whether Minkowski addition exhibits an analogous dimension-free regularization at the level of convex subsets.

The use of Minkowski addition is itself significant. A bounded Carath\'eodory-type operation does not have the same effect. Johnston recently proved that a natural stronger statement based on convex combinations of a bounded number of points fails in high dimension \cite{Johnston2025}. Thus Talagrand's problem is not simply an approximation-to-the-convex-hull question in another form. It is a problem about the interaction between addition, Gaussian measure and convexity.

This places the problem somewhat orthogonally to the classical Shapley--Folkman picture. In the latter, the ambient dimension controls how many nonconvex summands can contribute to an extreme point of a large sum, and one obtains quantitative closeness to a convex hull after sufficiently many additions. Talagrand asks for no approximation of the whole sum and no small Hausdorff error. The conclusion is instead measure-theoretic: a bounded sum, formed before the dimension is known, must contain one convex region carrying a fixed proportion of Gaussian mass. There is therefore no formal implication from the classical convexification theorems to Talagrand's statement, even though both express a regularizing effect of addition. The distinction also explains why examples in which $A+A+A$ remains disconnected are compatible with the theorem proved here.

A major step was obtained by Song \cite{Song2026}, who connected the geometric problem with representation questions for random vectors and proved several strong consequences, including optimal-scale ellipsoidal estimates. Hua, Song and Tudose subsequently solved Talagrand's Gaussian convexity problem \cite{HuaSongTudose2026}. Talagrand later gave a Hahn--Banach deduction with explicit numerical constants \cite{Talagrand2026}. Their principal structural result is naturally expressed in terms of convex order. Recall that for integrable random vectors $X,Y$ in $\R^n$ one writes $X\preceq_{\cx}Y$ if $\E\varphi(X)\le\E\varphi(Y)$ for every convex function $\varphi$ for which the expectations are defined. Convex order is classical in probability; its relation with martingale couplings goes back to Strassen \cite{Strassen1965}, and general background may be found in \cite{MullerStoyan2002}.

\begin{theorem}[Three-Gaussians theorem, Hua--Song--Tudose]
\label{thm:HST}
Let $G$ be a standard Gaussian random vector in $\R^n$ and let $X$ be an $\R^n$-valued random vector. If $X\preceq_{\cx}G$, then there exist standard Gaussian random vectors $G_1,G_2,G_3$ in $\R^n$, not necessarily independent, such that $X=G_1+G_2+G_3$ almost surely.
\end{theorem}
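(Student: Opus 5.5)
The plan is to argue by duality, in the spirit of multi-marginal optimal transport. Let $\mathcal M$ be the set of laws of $G_1+G_2+G_3$, where $(G_1,G_2,G_3)$ ranges over all couplings of three standard Gaussian vectors in $\R^n$. Since each marginal is fixed and tight, the set of couplings is convex and weakly compact, so $\mathcal M$ is a convex, weakly compact set of probability measures. Write $\mu$ for the law of $X$. If $\mu\notin\mathcal M$, Hahn--Banach gives a bounded continuous $f$ with
\[
\E f(X)>\sup_{\pi}\E_\pi f(G_1+G_2+G_3).
\]
By Kellerer-type multi-marginal duality, the right-hand side equals
\[
\inf\Bigl\{\textstyle\sum_{i=1}^3\E\varphi_i(G)\;:\;\varphi_1(x_1)+\varphi_2(x_2)+\varphi_3(x_3)\ge f(x_1+x_2+x_3)\ \ \forall x_1,x_2,x_3\Bigr\}.
\]
Symmetrizing the three potentials, one may take $\varphi_1=\varphi_2=\varphi_3=\varphi$. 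It therefore suffices to prove the following.

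\emph{Key inequality.} If $\varphi(a)+\varphi(b)+\varphi(c)\ge f(a+b+c)$ for all $a,b,c$, then $\E f(X)\le 3\E\varphi(G)$ whenever $X\preceq_{\cx}G$.

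To prove the key inequality, set $h(x)=\inf_{a+b+c=x}\bigl(\varphi(a)+\varphi(b)+\varphi(c)\bigr)\ge f(x)$. By Strassen's theorem \cite{Strassen1965} we may realise $X=\E[G\mid X]$, so that $G=X+Z$ with $\E[Z\mid X]=0$. It is then enough to show $\E h(X)\le 3\E\varphi(G)$. The strategy is to produce, for each value $x$ of $X$, a random triple $(a,b,c)$ with $a+b+c=x$, chosen so that when $x$ is itself drawn from $\mu$ each of $a,b,c$ is exactly standard Gaussian. The building block is the ``rotation'' identity: for independent standard Gaussians $G,W$, the vectors
\[
G,\qquad -\tfrac12G+\tfrac{\sqrt3}{2}W,\qquad -\tfrac12G-\tfrac{\sqrt3}{2}W
\]
are standard Gaussian and sum to $0$. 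This settles the case $X=0$. For general $X$, I would perturb these three vectors along the martingale displacement $Z$. Concretely, I would look for $G_1,G_2,G_3$ as measurable functions of $(X,Z,W)$ together with auxiliary randomness, possibly after a dyadic or Skorokhod-type decomposition of the martingale step $X\to G$. Each step of such a decomposition would then be absorbed by a small rotation that preserves Gaussianity of each coordinate while shifting the sum by the required increment.

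The step I expect to be the main obstacle is exactly this one: keeping all three marginals exactly Gaussian while forcing the sum to equal $X$. Two summands are not enough, since $\E|G_1+G_2|^2$ can be as small as $0$ but the structure is too rigid. The third summand supplies the extra freedom. The plan here is to first treat $n=1$, where convex order can be parametrised by potential functions $u_X\le u_G$ and an explicit monotone rearrangement construction seems feasible. I would then pass to general $n$, either by the duality reduction above, which lets one test only against $\varphi$, or by a Gaussian-invariance and approximation argument: first handle finitely supported $X$, then conclude by weak closedness of $\mathcal M$.
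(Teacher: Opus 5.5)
The paper does not prove this statement. It imports it from Hua--Song--Tudose and uses it only as a black box. Judged on its own, your proposal has a genuine gap, and the gap is the entire theorem.

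\textbf{The reduction is only a reformulation.} Your first half is correct: $\mathcal M$ is convex and weakly compact, Hahn--Banach separates with a function in $C_b$, Kellerer duality applies, and the potentials can be symmetrized. But this only rewrites the problem in dual form. Your key inequality is \emph{equivalent} to the theorem, because if $X=G_1+G_2+G_3$ then
$\E f(X)\le\E\bigl(\varphi(G_1)+\varphi(G_2)+\varphi(G_3)\bigr)=3\E\varphi(G)$.
Your proof of the key inequality then asks for a kernel assigning to each $x$ a random triple $(a,b,c)$ with $a+b+c=x$, each component exactly $N(0,I_n)$ when $x\sim\mu$. Such a kernel is precisely a coupling of three standard Gaussians whose sum has law $\mu$. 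That is the conclusion you are trying to prove, so the argument is circular and the duality step buys nothing.

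\textbf{The concrete ingredients do not close the gap.}

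\emph{The role of the third summand.} The rotation identity only treats $X=0$, which two summands already handle ($G$ and $-G$). So it says nothing about why three summands are needed. The paper recalls, via Talagrand's example, that two summands genuinely do not suffice. Any correct proof must therefore contain a step that uses the third summand essentially, and your plan has none.

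\emph{Rotations cannot produce the required coupling.} Every jointly Gaussian triple has a Gaussian sum, while $X$ is arbitrary. So the required coupling must be genuinely non-Gaussian. Rotating or linearly mixing $(X,Z,W)$ is guaranteed to preserve exact Gaussianity only for jointly Gaussian inputs, and the Strassen pair $(X,Z)$ is not jointly Gaussian. Absorbing martingale increments by ``small rotations'' would therefore destroy the exact $N(0,I_n)$ marginals. You give no mechanism that restores them, which is exactly the obstacle you flag yourself.

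\emph{No passage from $n=1$.} Convex order in $\R^n$ is not determined by the one-dimensional laws of $\ip{\theta}{X}$. Scalar representations do not assemble into Gaussian vectors with the required joint law. Your dual problem still involves arbitrary $\varphi$ on $\R^n$, so the dimension-free constant $3$ must be proved in $\R^n$ directly.

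\emph{Finitely supported $X$.} Reducing to this case is legitimate but no easier; it is exactly the case the paper needs in \cref{lem:finitebarrier}.

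\textbf{What is missing.} You need an honest argument on one side of the duality. One option is an explicit non-Gaussian coupling construction. The other is an analytic inequality: for instance, that for every admissible $\varphi$ there is a convex $\psi\ge h$ with $\E\psi(G)\le 3\E\varphi(G)$. This would suffice, since then $\E f(X)\le\E\psi(X)\le\E\psi(G)$ whenever $X\preceq_{\cx}G$.
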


The number three is geometrically meaningful. Talagrand showed that two summands cannot provide a dimension-free convexification theorem of this kind: for every fixed $L>0$ there are a dimension $N$ and a balanced set $A\subset\R^N$ (that is, $tx\in A$ whenever $x\in A$ and $|t|\le1$) of Gaussian measure at least $3/4$ such that $L(A+A)$ contains no large convex set \cite{Talagrand1995,Talagrand2026}. Thus the occurrence of three Gaussian marginals in \cref{thm:HST} reflects a genuine distinction between double and triple Minkowski sums rather than a peculiarity of the proof.

The first result of this paper is an existential strengthening in which neither symmetry nor an exterior dilation is required. The natural threshold $2/3$ comes directly from the union bound for three Gaussian marginals.

\begin{theorem}[Nonsymmetric three-sum convexification]
\label{thm:nonsymmetric}
Let $A\subset\R^n$ be measurable and suppose that $\gamma_n(A)>2/3$. Then $A+A+A$ contains a compact convex set $C$ with $0\in C$ and $\gamma_n(C)\ge1/2$. Moreover, for every $\lambda>1$, the set $\lambda(A+A+A)$ contains a finite polytope of Gaussian measure at least $1/2$.
\end{theorem}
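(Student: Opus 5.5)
The plan is to derive \cref{thm:nonsymmetric} from \cref{thm:HST} via a union bound and a minimax/Hahn--Banach argument, much as in Talagrand's deduction \cite{Talagrand2026}. Write $\delta=1-\gamma_n(A)<1/3$. Passing to a closed subset of $A$ by inner regularity, we may assume $A$ is closed and $\gamma_n(A)>2/3$ still holds. Then $A+A+A$ is an $F_\sigma$ set, hence measurable.

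\emph{Step 1 (union bound).} Let $X\preceq_{\cx}G$. \Cref{thm:HST} writes $X=G_1+G_2+G_3$ with each $G_i$ standard Gaussian. Hence
\begin{equation}
\Pp(X\notin A+A+A)\le\sum_{i=1}^3\Pp(G_i\notin A)=3\delta<1 .
\end{equation}
Two consequences follow. Taking $X=0$ gives $0\in A+A+A$. More generally, $X$ places mass at least $1-3\delta>0$ on $A+A+A$, and every atom of $X$ of mass exceeding $3\delta$ lies in $A+A+A$. A natural source of such $X$ is $X=\E[G\mid\mathcal F]$ for a sub-$\sigma$-algebra $\mathcal F$, possibly using external randomization. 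By Jensen, every such $X$ satisfies $X\preceq_{\cx}G$.

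\emph{Step 2 (reduction to a game).} Finitely valued conditional expectations only certify a set of barycenters. That set is a ball of dimension-free radius, so its Gaussian measure tends to $0$ as $n\to\infty$; the argument therefore cannot stop at atoms. Instead I would set up a two-player zero-sum game.
\begin{itemize}[leftmargin=*]
\item One player chooses the law $\mu$ of $X$ from the convex, weakly compact set $\mathcal M=\{\mu:\mu\preceq_{\cx}\gamma_n\}$.
\item The other chooses a compact convex set $K\ni0$ with $\gamma_n(K)\ge1/2$, possibly randomized.
\item The payoff is $\mu(K\setminus(A+A+A))$, or a suitable upper-semicontinuous surrogate.
\end{itemize}
Suppose, for contradiction, that no such $K$ lies in $A+A+A$. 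Then for every $K$ there is a point of $K$ outside the sum. Separating the convex set $\mathcal M$ from the cone generated by these obstructions (Hahn--Banach in the dual pair of measures and continuous functions of linear growth) should produce a single $\mu\in\mathcal M$ charging the complement of $A+A+A$ with mass greater than $3\delta$. This contradicts Step 1. The separating functional is expected to be a convex test function $\varphi$, and I would use the half-measure condition in the following concrete form. Among convex sets of Gaussian measure $1/2$, the extremal one for $\varphi$ is a half-space. Its one-dimensional structure lets one check directly that the candidate $\mu$ satisfies $\E\varphi(X)\le\E\varphi(G)$; this is where the constant $1/2$ enters.

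\emph{Step 3 (the polytope statement).} Fix $\lambda>1$. Approximate $C$ from inside by a finite polytope $P\subset C$ with $\gamma_n(P)\ge\gamma_n(C)-\eta$, for instance the convex hull of a finite $\eta$-net of $C$. Because $0\in C$ and $C$ is convex, $\lambda P\supset P$ and $\lambda P$ contains an $\epsilon$-neighbourhood of $P$ relative to $C$. Choosing $\eta$ small with respect to $\lambda-1$ gives $\gamma_n(\lambda P)\ge1/2$ and $\lambda P\subset\lambda(A+A+A)$.

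\emph{Main obstacle.} The hardest step is Step 2: converting the pointwise probabilistic statement ``every $X\preceq_{\cx}G$ hits $A+A+A$ with probability $>1-3\delta$'' into a single convex set of measure $1/2$. The difficulty is that the convex set must be chosen uniformly in the dimension. I would first try the minimax theorem with payoff $\mu\mapsto\mu(F)$ for closed $F$, using weak compactness of $\mathcal M$. If that is not enough, I would fall back on choosing $C$ as a sublevel set $\{\psi\le t\}$ of the convex function $\psi$ produced by the separation, with $t$ chosen so that $\gamma_n(\{\psi\le t\})=1/2$.
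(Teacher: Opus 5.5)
There is a genuine gap. Your Steps 1 and 3 are essentially sound, but Step 2, which carries the entire theorem, has no working mechanism.

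\emph{The game cannot give the contradiction.} From ``every admissible $K$ has a point outside $A+A+A$'' nothing produces a $\mu\preceq_{\cx}\gamma_n$ with $\mu(K\setminus(A+A+A))>3\delta$. A Dirac mass $\delta_x$ with $x\neq0$ is never dominated by $\gamma_n$ (the means differ), and $\gamma_n$ itself may give $K\setminus(A+A+A)$ arbitrarily small mass. Step 1 has to be applied to measures \emph{supported} off the sum. Replace $A$ by a compact $A_0\subset A$ with $\gamma_n(A_0)>2/3$, so that $D=A_0+A_0+A_0$ is compact; a closed $A$ only gives an $F_\sigma$ sum, which is not enough for the limiting step below. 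Let $D_\rho$ be the open $\rho$-neighbourhood of $D$, choose $R$ with $D_\rho\subset B(0,R-1)$, and set $L=\clB{R}\setminus D_\rho$. By Step 1, no $\mu\in\mathcal P(L)$ satisfies $\mu\preceq_{\cx}\gamma_n$, so each such $\mu$ has a convex witness $f_\mu$ with $\int f_\mu\,d\mu>\E f_\mu(G)$. Weak compactness of $\mathcal P(L)$, a finite subcover and a minimax theorem merge these into one convex $f$ with $f\ge\E f(G)+\kappa_\rho$ on $L$, for some $\kappa_\rho>0$. Convexity together with Jensen's inequality $f(0)\le\E f(G)$ then shows that $C_\rho=\{f\le\E f(G)\}$ contains $0$, misses $L$, and cannot leave $\clB{R}$. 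Hence $C_\rho\subset D_\rho$.

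\emph{Where the constant $1/2$ comes from.} Your half-space heuristic does not supply it, and it targets the wrong inequality: $\E\varphi(X)\le\E\varphi(G)$ for the dual measure says nothing about the distribution of $\varphi(G)$. Your fallback, a sublevel set $\{\psi\le t\}$ with $t$ a median of $\psi(G)$, is the right shape. But it avoids the complement only if $t\le\E\psi(G)$. That is precisely Kwapie\'n's theorem: a median of a convex function of a Gaussian vector does not exceed its mean. Without it, nothing prevents $\{\psi\le t\}$ from meeting the complement. With it, $\gamma_n(C_\rho)\ge1/2$.

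\emph{Passing to the undilated sum.} Strict separation only holds at positive distance from $D$, so you still need Blaschke selection as $\rho\downarrow0$. Combined with upper semicontinuity of $\gamma_n$ under Hausdorff convergence of compact sets and closedness of $D$, this gives a compact convex $C\subset D$ with $0\in C$ and $\gamma_n(C)\ge1/2$.

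\emph{Step 3.} Replace the phrase about an $\epsilon$-neighbourhood relative to $C$ by the actual reason. Since $0\in C$, you have $C\subset\lambda C$; since $C$ has positive volume, $\gamma_n(\lambda C)>\gamma_n(C)\ge1/2$. Inner polytope approximation of $\lambda C$ then finishes.
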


The proof still uses compactness and is not constructive. After replacing $A$ by a compact subset of measure larger than $2/3$, we place probability measures on a compact part of the complement of the triple sum. No such measure can be dominated by a standard Gaussian in convex order, because \cref{thm:HST} and the union bound would put positive mass back in the triple sum. A finite subcover and a minimax argument combine the resulting convex witnesses into one barrier which is uniformly strict on that compact complement. Kwapie\'n's mean--median comparison then produces a convex half-measure sublevel set. Letting a neighborhood of the triple sum decrease to the triple sum proves the theorem. In particular, this gives a half-measure convex core in the undilated triple sum under the assumption $\gamma_n(A)>2/3$, without any symmetry hypothesis. The argument remains nonconstructive, however, because it does not specify which finite family of convex witnesses must be chosen.

Talagrand asks explicitly \cite[Problem~2.3]{Talagrand2026} for an algorithm which, given a closed set $A\subset\R^n$ with $\gamma_n(A)\ge7/8$, constructs a convex set $C\subset5(A+A+A)$ with $\gamma_n(C)\ge3/4$. Our constructive starting point is that, once finitely many exterior points have been fixed, the separating measure is used only through finitely many evaluations. We therefore separate directly in an evaluation space $\R^m$. The dual variable is then a probability vector rather than an abstract positive functional, hence the probabilistic obstruction is a finitely supported random vector. The three-Gaussians theorem rules out this finite certificate. Supporting hyperplanes produce a polyhedral convex barrier, and Gaussian integrability gives a uniform slope bound on all its affine pieces. This slope bound makes it possible to pass from finitely many prescribed exterior points to the full complement by a finite net and one finite-dimensional convex minimization problem.

There are two further observations which substantially strengthen the numerical conclusion. First, the finite barrier can be chosen even, so its sublevel sets are centrally symmetric. Second, for a convex function of a Gaussian vector a median does not exceed the expectation, by a result of Kwapie\'n \cite{Kwapien1994}. Thus an expectation-one barrier already gives a half-measure unit sublevel, instead of requiring the loss coming from Markov's inequality at a higher level. Once a symmetric convex set of measure $1/2$ has been constructed, the Gaussian $S$-inequality of Lata\l{}a and Oleszkiewicz \cite{LatalaOleszkiewicz1999} gives the sharp universal lower bound for the measure of all its dilates.

Write
\begin{equation}
\label{eq:PsiDef}
\Psi(s)=\gamma_1([-s,s])=2\Phi(s)-1,\qquad a_0=\Psi^{-1}(1/2)=\Phi^{-1}(3/4).
\end{equation}
The constructive result is the following.

\begin{theorem}[Constructive Gaussian convexification]
\label{thm:main}
Let $S\subset\R^n$ be compact and centrally symmetric with $\gamma_n(S)>2/3$. For every $\lambda>1$ one can construct a centrally symmetric finite polytope $K_\lambda\subset\lambda(S+S+S)$ with $\gamma_n(K_\lambda)\ge1/2$. Moreover $S+S+S$ contains a compact centrally symmetric convex set of Gaussian measure at least $1/2$.

Consequently, if $A\subset\R^n$ is closed and $\gamma_n(A)\ge7/8$, then for every $\Lambda>1$ and every $0<p<\Psi(\Lambda a_0)$ one can construct a centrally symmetric finite polytope $C\subset\Lambda(A+A+A)$ with $\gamma_n(C)\ge p$. The construction uses finitely many compact finite-dimensional optimization problems followed by one finite-dimensional convex minimization problem.
\end{theorem}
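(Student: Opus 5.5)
The plan follows the outline given in the introduction. Put $T=S+S+S$; it is compact and centrally symmetric. Fix $\lambda>1$ and set $R$ large enough that $\gamma_n(\clB{R})$ is close to $1$.

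\textbf{Step 1 (finite certificates).} Fix finitely many points $y_1,\dots,y_m\in\clB{R}\setminus\lambda^{1/2}T$. We look for an even convex function $f$ with $\E f(G)\le1$ and $f(y_j)>1$ for every $j$. To find it, separate in the evaluation space $\R^m$. Let $\mathcal K$ be the convex cone of vectors $(f(y_j))_j-\E f(G)\,\mathbf 1$, where $f$ ranges over convex functions with bounded slopes; restricting to slope at most $L$ makes the relevant set compact. If no witness exists, a minimax or Hahn--Banach argument in $\R^m$ yields a probability vector $q$ such that $\sum_j q_j f(y_j)\le\E f(G)$ for all admissible $f$. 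Then the finitely supported $X$ with $\Pp(X=y_j)=q_j$ satisfies $X\preceq_{\cx}G$, after letting $L\to\infty$, which works because Gaussian integrability controls the tails. By \cref{thm:HST}, $X=G_1+G_2+G_3$. Since $\gamma_n(S)>2/3$, the union bound gives $\Pp(G_1,G_2,G_3\in S)>0$, so some $y_j\in T$, a contradiction. The witness can then be taken to be a finite maximum of affine functions, by using supporting hyperplanes at the $y_j$. It can be symmetrized as $(f(x)+f(-x))/2$, which keeps $\E f(G)$ unchanged by symmetry of $\gamma_n$ and keeps $f(y_j)>1$ provided the set of $y_j$ is symmetric, since $T$ is symmetric. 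Computing each witness is a compact finite-dimensional optimization problem.

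\textbf{Step 2 (slope bound and net).} Any affine piece $\ell$ of such a barrier with $\E f(G)\le 1$ obeys $\E\ell(G)\le1$ and $\E\ell^+(G)\le1$. This bounds $|\nabla\ell|$ by a constant $L_0$ depending only on Gaussian moments, and it bounds $f(0)$. Hence every barrier is $L_0$-Lipschitz. Cover the compact set $\clB{R}\setminus\lambda T^{\circ}$ by a finite $\delta$-net. Because $T$ is star-shaped, with $0\in T$ once $\gamma_n(S)>2/3$ forces $S\cap(-S)\neq\emptyset$, the net points lie off $\lambda^{1/2}T$ when $\delta$ is small. Solve one convex minimization problem over even convex piecewise-affine functions built from the collected pieces, with constraints $f(y)\ge1+\kappa$ on the net, minimizing $\E f(G)$. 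The Lipschitz bound then gives $f>1$ on all of $\clB{R}\setminus\lambda T$ once $\delta<\kappa/L_0$. The margin $\kappa$ comes from taking the minimax value strictly.

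\textbf{Step 3 (measure).} For convex $f$, Kwapień's theorem gives $\operatorname{med}f(G)\le\E f(G)\le1$, so $\gamma_n(\{f\le1\})\ge1/2$. Set $K_\lambda=\{f\le1\}\cap\clB{R}$, a symmetric polytope contained in $\lambda T$. Intersecting with the ball costs measure, so instead scale the inequality to $\E f\le 1-\eta$. Alternatively, intersect with a large cube and choose $R$ so that the loss is absorbed by a strict median inequality. This is the delicate bookkeeping step.

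\textbf{Step 4 (consequences).} A compactness and Blaschke selection argument as $\lambda\downarrow1$ gives a compact convex set in $T$ of measure $\ge1/2$, using the upper semicontinuity of $\gamma_n$ on compact convex sets. For closed $A$ with $\gamma_n(A)\ge7/8$, take a compact $S\subset A\cap(-A)$ with $\gamma_n(S)>3/4>2/3$. Apply the first part with $\lambda'$ slightly above $1$ to get $K$ of measure $\ge1/2$. By the $S$-inequality, $\gamma_n(tK)\ge\Psi(ta_0)$, and $tK\subset t\lambda'(A+A+A)$ with $t\lambda'=\Lambda$. Since $\Psi$ is continuous, we can pick $\lambda'$ close to $1$ so that $\Psi(\Lambda a_0/\lambda')>p$.

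\textbf{Main obstacle.} I expect the main obstacle to be Step 1. The dual vector must be shown to actually give $X\preceq_{\cx}G$ against all convex functions, not only Lipschitz ones. We also need a strict uniform margin that survives the net and the truncation.
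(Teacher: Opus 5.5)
\emph{Verdict: the proposal has a genuine gap in Steps 1--2, and it is exactly the uniform margin you flag as the main obstacle.}

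Your certificate is undilated. A failed separation gives $X\preceq_{\cx}G$, so \cref{thm:HST} and the union bound only put some $y_j$ in $T$. That excludes points outside $T$. Your passage to $\lambda^{1/2}T$ and $\lambda T$ rests on $T$ being star-shaped, which is false in general. For $S=[-1,1]\cup\pm[10,11]$ one has $\gamma_1(S)>2/3$ and $T\cap[0,\infty)=[0,3]\cup[8,13]\cup[19,23]\cup[30,33]$. For $1<\lambda<13/8$ the points of $(8,8\lambda^{1/2})$ lie in $T$ but not in $\lambda^{1/2}T$. The points of $[8\lambda^{1/2},8\lambda)$ lie in $\lambda^{1/2}T$ but not in $\lambda T$. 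So the ``contradiction'' in Step 1 is not one, and a net of $\clB{R}\setminus\lambda T$ cannot be chosen off $\lambda^{1/2}T$, however small $\delta$ is.

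Independently, a margin $\kappa$ obtained by ``taking the minimax value strictly'' depends on the finite point set, hence on $\delta$, while you need $\delta<\kappa/L_0$. This is circular. Compactness, as in the proof of \cref{thm:nonsymmetric}, gives at best an unknown margin on a compact set at positive distance from $T$. That places the core in a neighborhood of $T$, not in $\lambda T$, and it is not constructive.

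What is missing is the dilation built into the paper's finite separation (\cref{lem:finitebarrier}). Use the class of even convex $f\ge0$ with $f(0)=0$ and $\E f(G)\le1$. Take the net points outside $D=\lambda T$ itself, as maximizers of $\dist(\cdot,D)$ over grid cubes, and separate $(f(x_i))_i$ from $(\lambda,\infty)^m$.
\begin{itemize}
\item A failed separation gives $\sum_ip_if(x_i)\le\lambda$ on the class.
\item With $X=\varepsilon Y$ this yields $\E h(X)\le\lambda\E h(G)$ for nonnegative convex $h$ with $h(0)=0$.
\item The inequality $h(X/\lambda)\le h(X)/\lambda$ upgrades this to $X/\lambda\preceq_{\cx}G$; a general convex $\varphi$ is handled by subtracting a tangent at $0$, since $X$ is centered.
\item Then \cref{thm:HST} puts $X$ in $\lambda T$ with positive probability.
\end{itemize}
This excludes exactly $\lambda T$ and gives the gap $\lambda$ versus $1$ for every finite point set. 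So the mesh $\delta=(\lambda-1)/(2L)$ with $L=1/\E(Z-R)_+$ is fixed before the net is built. The witness is never computed; it only shows that the convex program in the coefficients $(u_i,v_i)$ is feasible with value at most $1$.

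The normalization $f\ge0$, $f(0)=0$ is also what your slope bound needs. The bound $\E\ell^+(G)\le\E f(G)$ requires $f\ge0$, and $L$ depends on $R$ through $\ell(x_i)>0$ with $|x_i|\le R$, not only on Gaussian moments.

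It also removes your Step 3 difficulty. Choose $R$ with $\lambda T\subset B(0,R-1)$, rather than so that $\gamma_n(\clB{R})$ is near $1$. Then convexity and $q(0)=0$ give $q>1$ for $|x|>R$ too. Hence $\{q\le1\}\subset\lambda T$ without truncation, and Kwapie\'n gives measure $\ge1/2$ directly. Your proposed fixes via $\E f\le1-\eta$ or a strict median inequality do not supply the extra mass that truncation would cost.

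Finally, $0\in T$ does not follow from $S\cap(-S)\ne\varnothing$, which is automatic for symmetric $S$. The paper obtains it from \cref{thm:HST} applied to $X=0$.
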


Taking $p=3/4$ gives the threshold $\Lambda>\Phi^{-1}(7/8)/\Phi^{-1}(3/4)=1.705510543\ldots$, compared with the factor $5$ in Talagrand's constructive question. At dilation $4$, \cref{thm:main} constructs polytopes of every Gaussian measure strictly
below $\Psi(4a_0)=0.993023\ldots$.

The upper bound supplied by \cref{thm:main} has a matching lower bound in its dependence on the target measure. To state this precisely, for $0<\varepsilon\le1/2$ put
\begin{equation}
\label{eq:qepsilon}
q_\varepsilon=\Phi^{-1}(1-\varepsilon/2),\qquad b_0=\Phi^{-1}(15/16),
\end{equation}
For $0<\varepsilon<1/2$, let $\Lambda_*(\varepsilon)$ be the infimum of all $\Lambda\ge1$ such that, for every $n$ and every closed $A\subset\R^n$ with $\gamma_n(A)\ge7/8$, the set $\Lambda(A+A+A)$ contains a convex subset of Gaussian measure at least $1-\varepsilon$.

\begin{theorem}[Optimality of the measure--dilation tradeoff]
\label{thm:optimality}
For every $0<\varepsilon<1/2$,
\begin{equation}
\label{eq:optimalTradeoff}
\max\left\{1,\frac{q_\varepsilon}{3b_0}\right\}
\le \Lambda_*(\varepsilon)
\le \frac{q_\varepsilon}{a_0}.
\end{equation}
For every strict inequality $\Lambda>q_\varepsilon/a_0$, the upper bound is attained by the constructive finite-polytope procedure of \cref{thm:main}. In particular,
\begin{equation}
\label{eq:optimalTradeoffOrder}
\Lambda_*(\varepsilon)=\Theta\bigl(\sqrt{\log(1/\varepsilon)}\bigr)
\qquad(\varepsilon\downarrow0).
\end{equation}
Moreover, for every $n\ge1$ and every $s\ge1$,
\begin{equation}
\label{eq:optimalSprofile}
\inf\left\{\gamma_n(sK):K=-K\text{ closed and convex},\ \gamma_n(K)\ge\frac12\right\}
=\Psi(sa_0).
\end{equation}
The same infimum is obtained if one restricts to compact centrally symmetric convex bodies, or even to centrally symmetric finite polytopes.
\end{theorem}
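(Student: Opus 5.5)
The two-sided bound \eqref{eq:optimalTradeoff} will be proved by comparing the constructive guarantee of \cref{thm:main} with a single explicit slab example. The profile identity \eqref{eq:optimalSprofile} will come from the Gaussian $S$-inequality together with the same slab. No new machinery beyond results already cited should be needed.

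\emph{Upper bound.} Fix $0<\varepsilon<1/2$ and $\Lambda>q_\varepsilon/a_0$. Since $\Psi$ is continuous and strictly increasing, $\Psi(\Lambda a_0)>\Psi(q_\varepsilon)=2(1-\varepsilon/2)-1=1-\varepsilon$. We may therefore take $p=1-\varepsilon\in(0,\Psi(\Lambda a_0))$ in \cref{thm:main}. This gives a centrally symmetric finite polytope $C\subset\Lambda(A+A+A)$ with $\gamma_n(C)\ge1-\varepsilon$, for every $n$ and every closed $A$ with $\gamma_n(A)\ge7/8$. Hence $\Lambda_*(\varepsilon)\le\Lambda$, and letting $\Lambda\downarrow q_\varepsilon/a_0$ gives the upper bound. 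It is attained constructively for every strict $\Lambda>q_\varepsilon/a_0$.

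\emph{Lower bound.} The bound $\Lambda_*(\varepsilon)\ge1$ holds by definition. For the other term, take $n=1$ (or any $n$) and the slab $A=\{x:|x_1|\le b_0\}$. It is closed and has measure $\Psi(b_0)=2\cdot\tfrac{15}{16}-1=7/8$. Since $A$ is convex, $A+A+A=3A=\{|x_1|\le3b_0\}$, so $\Lambda(A+A+A)=\{|x_1|\le3\Lambda b_0\}$ has measure $\Psi(3\Lambda b_0)$. Every convex subset has measure at most this. So an admissible $\Lambda$ must satisfy $\Psi(3\Lambda b_0)\ge1-\varepsilon=\Psi(q_\varepsilon)$, that is, $\Lambda\ge q_\varepsilon/(3b_0)$.

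\emph{Asymptotics.} The order \eqref{eq:optimalTradeoffOrder} then follows from the standard tail asymptotics $q_\varepsilon=\Phi^{-1}(1-\varepsilon/2)\sim\sqrt{2\log(1/\varepsilon)}$. Both bounds are constant multiples of $q_\varepsilon$ once $q_\varepsilon\ge3b_0$.

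\emph{Profile identity.} For the lower bound in \eqref{eq:optimalSprofile}, let $K=-K$ be closed convex with $\gamma_n(K)\ge1/2$. Shrinking to compare with a slab: the slab $P=\{|x_1|\le a_0\}$ has $\gamma_n(P)=1/2\le\gamma_n(K)$. The Lata\l{}a--Oleszkiewicz $S$-inequality \cite{LatalaOleszkiewicz1999} then gives, for $s\ge1$, $\gamma_n(sK)\ge\gamma_n(sP)=\Psi(sa_0)$. If $\gamma_n(K)>1/2$, apply the inequality to the slab of equal measure, whose half-width exceeds $a_0$; this yields an even larger bound.

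The slab $P$ itself is closed, convex and symmetric with $\gamma_n(sP)=\Psi(sa_0)$, so the infimum equals $\Psi(sa_0)$. For compact bodies and polytopes, fix $\delta>0$ and set
\[
P_{R,\delta}=\{|x_1|\le a_0+\delta,\ |x_i|\le R\ (i\ge2)\}.
\]
This is a symmetric polytope, and $\gamma_n(P_{R,\delta})\uparrow\Psi(a_0+\delta)>1/2$ as $R\to\infty$. Choose $R$ with $\gamma_n(P_{R,\delta})\ge1/2$. Then $\gamma_n(sP_{R,\delta})\le\Psi(s(a_0+\delta))$, which tends to $\Psi(sa_0)$ as $\delta\downarrow0$. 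The restricted infima therefore coincide with $\Psi(sa_0)$.

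The only delicate point is invoking the $S$-inequality in the right form, namely comparing with the symmetric slab of equal measure, and the approximation step for polytopes. Both are routine.
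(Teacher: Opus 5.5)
Your proposal is correct and follows the paper's proof essentially step by step. The lower bound uses the same one-dimensional interval $[-b_0,b_0]$, the profile identity uses the $S$-inequality together with the strip $\{|x_1|\le a_0\}$, and the polytope case is handled by thin boxes. The variations are minor. For the upper bound you let $\Lambda\downarrow q_\varepsilon/a_0$ in the constructive statement, whereas the paper applies the $S$-inequality to the undilated endpoint core $K\subset A+A+A$, which shows that $q_\varepsilon/a_0$ itself is admissible (more than the infimum requires). You also approximate with boxes of half-width $a_0+\delta$, while the paper uses boxes normalized to measure exactly $1/2$.
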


Thus the high-measure dilation in the constructive theorem has the optimal order, already for one-dimensional convex inputs. The second assertion makes a different and exact optimality statement. Once one retains only the information that the constructed core is symmetric, convex and has measure $1/2$, the profile $\Psi(sa_0)$ and hence the threshold $1.705510543\ldots$ for measure $3/4$ cannot be improved. This does not assert that the same numerical threshold is optimal for Talagrand's problem itself: an improvement there would have to use additional information about the particular cores produced by the barrier construction.

The finite description also gives a stronger ellipsoid conclusion. Song proved that a bounded sum of a large Gaussian set contains an ellipsoid at the optimal scale $\sqrt{(\log n)/n}$ and observed that the ellipsoid in his argument is not explicitly constructed \cite{Song2026}. Combining the finite half-space description with a semidefinite feasibility problem gives not only a constructive ellipsoid, but ellipsoids of arbitrarily high prescribed Gaussian measure inside the undilated triple sum.

\begin{theorem}[Constructive high-measure ellipsoid]
\label{thm:constructiveellipsoid}
There is a universal constant $c>0$ with the following property. Let $n\ge2$, let $0<\eta\le1/6$, and let $A\subset\R^n$ be closed with $\gamma_n(A)\ge5/6+\eta$. For every $0<\varepsilon\le1/2$ one can construct a centered ellipsoid $E_\varepsilon$ such that
\begin{equation}
\label{eq:ellipsoidMain}
\gamma_n(E_\varepsilon)\ge1-\varepsilon,
\qquad
\frac{c}{q_\varepsilon}\sqrt{\frac{\log n}{n}}\,E_\varepsilon
\subset A+A+A.
\end{equation}
The construction first uses finitely many compact finite-dimensional optimization problems and one finite-dimensional convex minimization. It then obtains the ellipsoid from a finite semidefinite feasibility problem. The order $\sqrt{(\log n)/n}$ and the factor $q_\varepsilon^{-1}$ are both optimal up to constants.
\end{theorem}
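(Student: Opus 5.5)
We prove the constructive ellipsoid theorem (\cref{thm:constructiveellipsoid}) in three stages: a symmetric convex core, a Gaussian-measure ellipsoid inside that core, and the optimality bounds.

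\textbf{Step 1: a symmetric convex core in the undilated sum.} Replace $A$ by a compact subset $A'$ with $\gamma_n(A')>5/6+\eta/2$, and set $S=A'\cap(-A')$. Then $S$ is compact and centrally symmetric, and
\begin{equation}
\gamma_n(S)\ge 1-2(1/6-\eta/2)>2/3.
\end{equation}
Apply \cref{thm:main} to $S$ with some $\lambda>1$. This gives a centrally symmetric polytope $K=\{x:|\ip{u_i}{x}|\le 1,\ i\le N\}$ with $\gamma_n(K)\ge1/2$ and $K\subset\lambda(A+A+A)$. The factor $\lambda^{-1}$ is absorbed into $c$ by fixing, say, $\lambda=2$. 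The half-space description of $K$ is then explicit data.

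\textbf{Step 2: an ellipsoid inside $K$, found by a semidefinite feasibility problem.} Since $\gamma_n(K)\ge1/2$, each slab $\{|\ip{u_i}{x}|\le1\}$ has measure at least $1/2$, so $|u_i|\le 1/a_0$. This alone only gives a ball of radius about $a_0$, far too small.

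The $\sqrt{(\log n)/n}$ scale should come from a Dvoretzky-type or Lewis-position argument in the spirit of Song. A symmetric convex set of Gaussian measure $1/2$ contains an ellipsoid $E$ with $\gamma_n(E)\ge 1-\varepsilon$ after shrinking by $c\sqrt{\log n/n}/q_\varepsilon$. My first attempt would combine the $\ell$-position with Gaussian concentration of the gauge $\|G\|_K$. The median of $\|G\|_K$ is at most $1$, so $\E\|G\|_K\lesssim1$, and by Kwapie\'n's inequality the expectation controls the median.

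Then invoke the known estimate that a symmetric convex body $K$ with $\E\|G\|_K\le1$ contains $c\sqrt{\log n/n}\,\mathcal E$ for an ellipsoid $\mathcal E$ of large Gaussian measure. This is the dual form of a Sudakov/Figiel--Tomczak--Jaegermann type bound, obtained via the $MM^*$-estimate restricted to a good subspace, with the $\log n$ gain coming from Pisier's $K$-convexity only up to $\log$. This is exactly the step I expect to be the main obstacle, because it is where the precise $\sqrt{\log n}$ rate must be recovered rather than a cruder bound.

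To make the step constructive, observe that the containment $rE_Q=\{x: x^TQ^{-1}x\le r^{-2}\}\subset K$ is equivalent to the finite family of linear matrix inequalities $u_i^TQu_i\le r^{-2}$. Gaussian measure at least $1-\varepsilon$ can be enforced by a sufficient convex condition, for instance $\operatorname{tr}(Q^{-1})\le 1/q_\varepsilon^2$-type or operator-norm constraints after normalizing. Existence from the previous paragraph then guarantees that this SDP is feasible, and any feasible point yields $E_\varepsilon$. The dependence $q_\varepsilon^{-1}$ comes from requiring $E_\varepsilon$ to contain the $q_\varepsilon$-dilate of a reference ellipsoid of half measure along each relevant direction, by a union-type bound over the directions.

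\textbf{Step 3: optimality.} For the $n$-dependence, take $A$ to be the complement of a union of many thin random slabs (Talagrand/Song's example), which forbids ellipsoids of larger scale. For the $\varepsilon$-dependence, a one-dimensional slab input gives $A+A+A$ bounded in one direction by $3b_0$-type quantities. Any ellipsoid of measure $1-\varepsilon$ has width at least $q_\varepsilon$ in that direction, so the constant must be $\lesssim 1/q_\varepsilon$.
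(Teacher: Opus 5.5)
There is a genuine gap, and it sits in Step 2, the one step you yourself flagged. Your Step 1 and the slab example for the $q_\varepsilon^{-1}$ optimality are fine and match the paper. (One small point: inner regularity is not constructive; the paper instead intersects $A\cap(-A)$ with the explicit ball $\clB{\sqrt{n/\eta}}$.)

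\textbf{The existence claim in Step 2 is not established.} This claim is the only source of the scale $\sqrt{(\log n)/n}$, and it is also what certifies that your semidefinite system is feasible. The route you sketch does not produce it.
\begin{itemize}
\item The $MM^*$ estimate in $\ell$-position, even with Pisier's $K$-convexity bound, controls spherical averages of $\|\cdot\|_K$ and of the support function. An inclusion $r\mathcal E\subset K$ is a worst-direction statement, so averages do not give it.
\item Passing to a good subspace gives only a lower-dimensional section. A centered ellipsoid of Gaussian measure at least $1/2$ lies in a slab of width twice its smallest semi-axis, so every semi-axis is at least $a_0$ and the ellipsoid is genuinely $n$-dimensional.
\end{itemize}
The paper does not prove this step from Banach-space tools. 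It applies Song's theorem \cite[Corollary~0.7]{Song2026} as a black box to the convex polytope $P=2K$, using two facts:
\begin{itemize}
\item The $S$-inequality gives $\gamma_n(P)\ge\Psi(2a_0)>2/3$, which is Song's hypothesis; your $K$ only has measure $1/2$.
\item Convexity gives $P^{(q_{\mathrm E})}=q_{\mathrm E}P$. Hence Song's ellipsoid, normalized by $\operatorname{Tr}Q=1/10$, satisfies $q_{\mathrm E}^{-1}\sqrt{(\log n)/n}\,E_Q\subset P$.
\end{itemize}
This is exactly what makes the finite SDP feasible.

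\textbf{Your mechanism for reaching measure $1-\varepsilon$ does not work as stated.} Encoding the measure directly in the SDP through a trace condition fails in both versions:
\begin{itemize}
\item $\operatorname{tr}(Q^{-1})\le q_\varepsilon^{-2}$ gives only measure $1-q_\varepsilon^{-2}$ by Markov.
\item $\operatorname{tr}(Q^{-1})\le\varepsilon$ is infeasible at your target scale. For the cube $[-b,b]^n$ with $b\asymp\sqrt{\log n}$, any inscribed $r\mathcal E$ obeying it has $r\le b\sqrt{\varepsilon/n}$.
\end{itemize}
A union bound over directions would cost a dimensional factor. A trace bound combined with a lower bound on the smallest semi-axis and Gaussian concentration of $x\mapsto\sqrt{x^TQ^{-1}x}$ can be made to work. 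However, that only reproduces a dilation argument, and its feasibility still rests on the half-measure existence result you lack.

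The missing idea is to decouple the two issues:
\begin{itemize}
\item Solve the SDP only for a half-measure ellipsoid: the trace normalization plus Markov gives $\gamma_n(E_Q)\ge 9/10$.
\item Set $E_\varepsilon=(q_\varepsilon/a_0)E_Q$ and apply the Lata\l{}a--Oleszkiewicz $S$-inequality to this centered ellipsoid. This gives $\gamma_n(E_\varepsilon)\ge\Psi(q_\varepsilon)=1-\varepsilon$ and costs exactly the factor $a_0/q_\varepsilon$ in the containment, so $c=a_0/(4q_{\mathrm E})$.
\end{itemize}

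\textbf{The $n$-optimality example is misdirected.} The complement of a union of thin slabs is nonconvex, and its triple sum is typically enormous, so it does not obstruct large ellipsoids. The relevant example is convex: a cube $[-b,b]^n$ of measure $5/6+\eta$. There $A+A+A=3A$, and every half-measure ellipsoid $E$ with $\alpha E\subset 3A$ forces $\alpha=O(\sqrt{(\log n)/n})$, as in \cite[Section~3.3.1]{Song2026}.
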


Talagrand's hypothesis $\gamma_n(A)\ge7/8$ is the special case $\eta=1/24$. The point of \cref{thm:constructiveellipsoid} is that the final ellipsoid lies in $A+A+A$ itself. Its measure can tend to one; only the scale at which it is contained in the triple sum decreases, necessarily, like $q_\varepsilon^{-1}$. For balanced sets the same symmetric construction has no symmetrization loss and yields a convex set of measure $3/4$ in six Minkowski summands. More generally, $3m$ summands give every measure below $\Psi(ma_0)$, and the number of summands required for measure $1-\varepsilon$ has the optimal order $\sqrt{\log(1/\varepsilon)}$.

The paper is organized as follows. The next section proves the nonsymmetric existential theorem. The following section gives the complete finite construction, including the symmetric separation, slope estimate, grid, convex program, the application of Kwapie\'n's mean--median comparison, the $S$-inequality and the endpoint argument. We then prove the two optimality statements and the constructive high-measure ellipsoid theorem. The remaining sections give examples, the balanced case, and an independent construction based on subgaussian tests, before recording the size of the finite construction and the remaining questions.

\section{A nonsymmetric three-sum theorem}

\begin{proof}[Proof of \cref{thm:nonsymmetric}]
By inner regularity, there is a compact set $A_0\subset A$ such that
\begin{equation}
\label{eq:nonsymmetricA0}
\gamma_n(A_0)>\frac23.
\end{equation}
Put $D=A_0+A_0+A_0$. The set $D$ is compact. For $\rho>0$ let
\begin{equation}
\label{eq:nonsymmetricNeighborhood}
D_\rho=\{x\in\R^n:\dist(x,D)<\rho\}.
\end{equation}
Choose $R=R_\rho$ sufficiently large that $D_\rho\subset B(0,R-1)$, and set
\begin{equation}
\label{eq:nonsymmetricComplement}
L_\rho=\clB R\setminus D_\rho.
\end{equation}
This is a compact set. We shall construct a compact convex set of Gaussian measure at least $1/2$ inside $D_\rho$, with a bound independent of $\rho$, and then let $\rho$ decrease to zero.

Let $\mathcal P(L_\rho)$ be the space of Borel probability measures on $L_\rho$, equipped with the weak topology. It is compact and convex. We first claim that for every $\mu\in\mathcal P(L_\rho)$ there is a finite-valued convex function $f_\mu:\R^n\to\R$, integrable with respect to Gaussian measure, such that
\begin{equation}
\label{eq:nonsymmetricWitness}
\int f_\mu\,d\mu>\E f_\mu(G).
\end{equation}
Indeed, let $X$ have law $\mu$ and suppose to the contrary that $X\preceq_{\cx}G$. By \cref{thm:HST}, there are standard Gaussian random vectors $G_1,G_2,G_3$ such that $X=G_1+G_2+G_3$ almost surely. Using only their marginal laws and \eqref{eq:nonsymmetricA0},
\begin{equation}
\label{eq:nonsymmetricUnionBound}
\Pp(G_1,G_2,G_3\in A_0)
\ge 3\gamma_n(A_0)-2>0.
\end{equation}
On this event $X\in D$. This is impossible because $X$ is supported on $L_\rho$, which is disjoint from $D$. Hence $X\not\preceq_{\cx}G$, and the definition of convex order gives a convex witness satisfying \eqref{eq:nonsymmetricWitness}. The strict inequality itself ensures that the Gaussian expectation of the witness is finite.

For every finite convex function $f$ with $\E f(G)<\infty$, put
\begin{equation}
\label{eq:nonsymmetricOpenSet}
\mathcal U_f=\left\{\mu\in\mathcal P(L_\rho):
\int f\,d\mu-\E f(G)>0\right\}.
\end{equation}
Since a finite convex function is continuous and $L_\rho$ is compact, the map $\mu\mapsto\int f\,d\mu$ is continuous on $\mathcal P(L_\rho)$. The sets $\mathcal U_f$ are therefore open. By the preceding paragraph they cover $\mathcal P(L_\rho)$, so compactness supplies a finite subcover associated with convex functions $f_1,\ldots,f_m$.

For $1\le j\le m$ define
\begin{equation}
\label{eq:nonsymmetricLinearFunctionals}
L_j(\mu)=\int f_j\,d\mu-\E f_j(G).
\end{equation}
The continuous function $\mu\mapsto\max_jL_j(\mu)$ is strictly positive on the compact space $\mathcal P(L_\rho)$. Consequently
\begin{equation}
\label{eq:nonsymmetricDelta}
\delta_\rho:=\min_{\mu\in\mathcal P(L_\rho)}\max_{1\le j\le m}L_j(\mu)>0.
\end{equation}
Let $\Delta_m$ denote the probability simplex. Since the expression $\sum_jp_jL_j(\mu)$ is affine and continuous in both variables, the minimax theorem \cite{Sion1958} gives
\begin{align}
\delta_\rho
&=\min_{\mu\in\mathcal P(L_\rho)}
  \max_{p\in\Delta_m}\sum_{j=1}^mp_jL_j(\mu)\notag\\
&=\max_{p\in\Delta_m}
  \min_{\mu\in\mathcal P(L_\rho)}\sum_{j=1}^mp_jL_j(\mu).
\label{eq:nonsymmetricMinimax}
\end{align}
Choose $p=(p_1,\ldots,p_m)\in\Delta_m$ attaining the last maximum and set
\begin{equation}
\label{eq:nonsymmetricCombinedBarrier}
f_\rho=\sum_{j=1}^mp_jf_j,
\qquad
m_\rho=\E f_\rho(G).
\end{equation}
Taking $\mu$ to be the Dirac mass at $x\in L_\rho$ in \eqref{eq:nonsymmetricMinimax} gives
\begin{equation}
\label{eq:nonsymmetricStrictBarrier}
f_\rho(x)\ge m_\rho+\delta_\rho
\qquad(x\in L_\rho).
\end{equation}

Jensen's inequality gives $f_\rho(0)\le m_\rho$. Define
\begin{equation}
\label{eq:nonsymmetricCoreRho}
C_\rho=\{x\in\R^n:f_\rho(x)\le m_\rho\}.
\end{equation}
This set is closed and convex and contains the origin. By Kwapie\'n's mean--median comparison for convex functions of Gaussian vectors \cite{Kwapien1994}, a median of $f_\rho(G)$ does not exceed $m_\rho=\E f_\rho(G)$. Hence
\begin{equation}
\label{eq:nonsymmetricHalfMeasure}
\gamma_n(C_\rho)\ge\frac12.
\end{equation}
We next check that $C_\rho\subset D_\rho$. If $x\in C_\rho\cap\clB R$ and $x\notin D_\rho$, then $x\in L_\rho$, contradicting \eqref{eq:nonsymmetricStrictBarrier}. If instead $x\in C_\rho$ and $|x|>R$, put $y=Rx/|x|$. The choice of $R$ gives $y\in L_\rho$. With $\theta=R/|x|\in(0,1)$, convexity and Jensen's inequality give
\begin{equation}
f_\rho(y)\le\theta f_\rho(x)+(1-\theta)f_\rho(0)\le m_\rho,
\end{equation}
again contradicting \eqref{eq:nonsymmetricStrictBarrier}. Thus
\begin{equation}
\label{eq:nonsymmetricContainmentRho}
C_\rho\subset D_\rho.
\end{equation}
In particular $C_\rho$ is compact.

Let $\rho_k\downarrow0$. All the sets $C_{\rho_k}$ lie, for large $k$, in one fixed compact ball. By Blaschke's selection theorem \cite{Schneider2014}, a subsequence converges in Hausdorff distance to a nonempty compact convex set $C$. Since $0\in C_{\rho_k}$, one has $0\in C$. If $x\in C$, choose $x_k\in C_{\rho_k}$ with $x_k\to x$. From \eqref{eq:nonsymmetricContainmentRho}, $\dist(x_k,D)<\rho_k$, and the closedness of $D$ gives $x\in D$. Hence
\begin{equation}
\label{eq:nonsymmetricLimitContainment}
C\subset D\subset A+A+A.
\end{equation}
For every $t>0$, Hausdorff convergence gives $C_{\rho_k}\subset C+tB_2^n$ for all sufficiently large $k$. Therefore
\begin{equation}
\frac12\le\limsup_k\gamma_n(C_{\rho_k})
\le\gamma_n(C+tB_2^n).
\end{equation}
As $t\downarrow0$, the compact sets $C+tB_2^n$ decrease to $C$, and continuity from above yields $\gamma_n(C)\ge1/2$. This proves the first assertion.

It remains to obtain a finite polytope after an arbitrary exterior dilation. Since $C$ has positive Gaussian measure, it has nonempty interior. In fact $0\in\operatorname{int}C$. Otherwise a supporting hyperplane at the origin would place $C$ in a closed half-space through the origin. That half-space has Gaussian measure $1/2$, while the compact set $C$ omits an open subset of it, and hence would have Gaussian measure strictly smaller than $1/2$, a contradiction.

It follows that $C\subset\operatorname{int}(\lambda C)$ whenever $\lambda>1$. Indeed, choose $r>0$ with $rB_2^n\subset C$. If $x\in C$ and $|z|<r$, then
\begin{equation}
x+(\lambda-1)z
=\lambda\left(\frac1\lambda x+\left(1-\frac1\lambda\right)z\right)
\in\lambda C.
\end{equation}
Moreover $C$ has positive Lebesgue measure, so $\lambda C\setminus C$ has positive Lebesgue, and hence positive Gaussian, measure. Consequently $\gamma_n(\lambda C)>\gamma_n(C)\ge1/2$. A convex body is the Hausdorff limit from within of finite polytopes \cite{Schneider2014}. Since the boundary of a convex body has Gaussian measure zero, there are finite polytopes $P_j\subset\lambda C$ with $\gamma_n(P_j)\to\gamma_n(\lambda C)$. Choose $j$ large enough that $\gamma_n(P_j)\ge1/2$ and write $P_\lambda=P_j$. Finally,
\begin{equation}
P_\lambda\subset\lambda C\subset\lambda D
\subset\lambda(A+A+A),
\end{equation}
which proves the second assertion.
\end{proof}

\section{Finite barriers and proof of the main theorem}

\noindent\textit{1. The compact symmetric set.} Throughout this section let $G\sim N(0,I_n)$. We first explain how the set in Talagrand's question supplies the compact symmetric set in \cref{thm:main}. Set $r=\sqrt{24n}$ and
\begin{equation}
\label{eq:defS}
S=A\cap(-A)\cap\clB r.
\end{equation}
Gaussian invariance under reflection gives $\gamma_n(A\cap(-A))\ge2\gamma_n(A)-1\ge3/4$. Since $\E|G|^2=n$, Markov's inequality gives $\Pp(|G|>r)\le1/24$, and therefore
\begin{equation}
\label{eq:Smeasure}
\gamma_n(S)\ge\frac34-\frac1{24}=\frac{17}{24}>\frac23.
\end{equation}
The set $S$ is compact because $A$ is closed, and it is centrally symmetric by construction. More generally, the argument below starts from any compact centrally symmetric $S$ with $\gamma_n(S)>2/3$; choose $r$ so that $S\subset\clB r$. Fix $\lambda>1$ and define
\begin{equation}
\label{eq:defD}
D=\lambda(S+S+S).
\end{equation}
Then $D$ is compact and centrally symmetric, and $D\subset\clB{3\lambda r}$. For the set obtained from \eqref{eq:defS}, one also has $D\subset\lambda(A+A+A)$. We shall construct a centrally symmetric polytope of Gaussian measure at least $1/2$ inside $D$.

The normalization of the barriers below requires $0\in D$. This is not automatic from symmetry alone, since $S$ need not contain the origin. Consider the deterministic random vector $X=0$. For every finite convex function $\varphi$, Jensen's inequality gives $\varphi(0)=\varphi(\E G)\le\E\varphi(G)$, and hence $0\preceq_{\cx}G$. By \cref{thm:HST}, there exist standard Gaussian vectors $G_1,G_2,G_3$ such that $G_1+G_2+G_3=0$ almost surely. Since each marginal has law $\gamma_n$, the union bound gives $\Pp(G_1,G_2,G_3\in S)\ge3\gamma_n(S)-2>0$. On this event $0=G_1+G_2+G_3\in S+S+S$, and therefore $0\in D$.

\medskip\noindent\textit{2. Finite symmetric separation.} We now replace the infinite-dimensional Hahn--Banach step by separation in a finite evaluation space. We enforce the symmetry of $D$ already at the level of the test functions. This gives a symmetric dual certificate and, ultimately, a barrier whose sublevel sets are centrally symmetric.

\begin{lemma}[Finite symmetric barrier]
\label{lem:finitebarrier}
Let $x_1,\ldots,x_m\in D^c$. Then there are vectors $u_i\in\R^n$ and numbers $v_i\le0$ such that
\begin{equation}
\label{eq:qFinite}
q(x)=\max\bigl(0,|\ip{u_1}{x}|+v_1,\ldots,|\ip{u_m}{x}|+v_m\bigr)
\end{equation}
is even and convex, satisfies $q(x_i)>\lambda$ for every $i$, and $\E q(G)\le1$.
\end{lemma}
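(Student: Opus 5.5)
The plan is to separate in the finite evaluation space $\R^m$, obtain a contradiction from \cref{thm:HST} exactly as in the proof of \cref{thm:nonsymmetric}, and then turn an arbitrary even convex barrier into the max-of-ridges form \eqref{eq:qFinite} using supporting hyperplanes.

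\emph{Step 1: reduction to an even convex barrier.} Let $\mathcal F$ be the class of finite even convex functions $g:\R^n\to\R$ with $\E|g(G)|<\infty$. I first aim to find $g\in\mathcal F$ with
\begin{equation}
g(x_i)-g(0)>\lambda\bigl(\E g(G)-g(0)\bigr)\qquad(1\le i\le m).
\end{equation}
Jensen gives $\E g(G)\ge g(0)$. Replacing $g$ by $g-g(0)$, we may therefore assume $g(0)=0$, $\E g(G)\ge0$ and $g(x_i)>\lambda\,\E g(G)$. Such a $g$ is nonnegative, since evenness and convexity give $g(x)\ge g(0)=0$. After adding a small multiple of $|x|^2$ if necessary, which keeps the inequalities strict, $\E g(G)>0$, and we can rescale so that $\E g(G)=1$ and $g(x_i)>\lambda$.

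Now take the supporting affine function of $g$ at $x_i$, namely $\ell_i(x)=g(x_i)+\ip{u_i}{x-x_i}$ with $u_i$ a subgradient. Put $h_i(x)=\max(\ell_i(x),\ell_i(-x))=|\ip{u_i}{x}|+v_i$, where $v_i=g(x_i)-\ip{u_i}{x_i}$. Since $g$ is even, $h_i\le g$. The subgradient inequality at $0$ gives $v_i\le g(0)=0$. Also $h_i(x_i)\ge g(x_i)>\lambda$. Hence the function $q=\max(0,h_1,\dots,h_m)$ has the required form, is even and convex, satisfies $q(x_i)>\lambda$, and obeys $q\le g$, so $\E q(G)\le1$.

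\emph{Step 2: finite separation.} Consider the set
\begin{equation}
K=\Bigl\{\bigl(g(x_i)-g(0)-\lambda(\E g(G)-g(0))\bigr)_{i\le m}:g\in\mathcal F\Bigr\}\subset\R^m.
\end{equation}
It is a convex cone, because the map is linear in $g$ and $\mathcal F$ is a convex cone. Suppose $K$ misses the open positive orthant. Finite-dimensional separation then gives $p\in\Delta_m$ with
\begin{equation}
\sum_ip_i\bigl(g(x_i)-g(0)\bigr)\le\lambda\bigl(\E g(G)-g(0)\bigr)\qquad\text{for all }g\in\mathcal F.
\end{equation}

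\emph{Step 3: contradiction via \cref{thm:HST}.} Let $X=\varepsilon x_I/\lambda$, where $\Pp(I=i)=p_i$ and $\varepsilon$ is an independent random sign. I claim $X\preceq_{\cx}G$. Given a convex $\varphi$ with $\E\varphi(G)$ finite, set $\psi(y)=\tfrac12(\varphi(y)+\varphi(-y))$, which is even and convex, and apply the separating inequality to $g(x)=\psi(x/\lambda)\in\mathcal F$. Convexity gives $\psi(G/\lambda)\le\lambda^{-1}\psi(G)+(1-\lambda^{-1})\psi(0)$, hence
\begin{equation}
\lambda\bigl(\E\psi(G/\lambda)-\psi(0)\bigr)\le\E\psi(G)-\psi(0).
\end{equation}
Combining the two inequalities yields $\E\varphi(X)=\sum_ip_i\psi(x_i/\lambda)\le\E\psi(G)=\E\varphi(G)$. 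By \cref{thm:HST}, $X=G_1+G_2+G_3$ almost surely. The union bound gives $\Pp(G_1,G_2,G_3\in S)\ge3\gamma_n(S)-2>0$, so $X\in S+S+S$ with positive probability. Thus $\pm x_i\in\lambda(S+S+S)=D$ for some $i$, and central symmetry of $D$ gives $x_i\in D$, a contradiction. Therefore $K$ meets the open orthant, and Step 1 finishes the proof.

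\emph{Main obstacle.} The delicate point is the interplay of the dilation $\lambda$ with the normalization at $0$. The separating inequality has to be formulated with $g(0)$ subtracted, so that the convexity estimate $\lambda(\E\psi(G/\lambda)-\psi(0))\le\E\psi(G)-\psi(0)$ converts it into genuine convex order. A related check is that the $v_i$ come out nonpositive, which again uses $g(0)=0$. The integrability conventions in the definition of $\preceq_{\cx}$ should be handled by restricting $\mathcal F$ to Gaussian-integrable functions, and this restriction appears harmless.
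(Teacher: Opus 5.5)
Your proof is correct and follows essentially the same route as the paper: separation in the evaluation space $\R^m$, the symmetrized finitely supported certificate $\varepsilon x_I$, the three-Gaussians theorem with the union bound to reach a contradiction, and then symmetric supporting ridges $|\ip{u_i}{x}|+v_i$ to obtain the required form of $q$. The differences are only bookkeeping. You separate a homogenized cone (with $g(0)$ subtracted) from the open orthant, where the paper separates the normalized class $\{f\ge0,\ f(0)=0,\ \E f(G)\le1\}$ from $(\lambda,\infty)^m$, and you get convex order by symmetrizing $\varphi$ directly rather than first subtracting a supporting affine function at $0$. The one step you left unstated, membership of $\psi(\cdot/\lambda)$ in your class, follows from the convexity bound you already use, $\psi(0)\le\psi(G/\lambda)\le\lambda^{-1}\psi(G)+(1-\lambda^{-1})\psi(0)$.
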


\begin{proof}
Let $\mathcal F$ be the class of all finite-valued functions $f:\R^n\to[0,\infty)$ which are convex and even, satisfy $f(0)=0$, and obey $\E f(G)\le1$. We first show that there exists $f\in\mathcal F$ such that $f(x_i)>\lambda$ for every $i$. Suppose, to the contrary, that no such function exists. Put $V=\{(f(x_1),\ldots,f(x_m)):f\in\mathcal F\}\subset\R^m$ and $U=(\lambda,\infty)^m$. The set $V$ is convex because $\mathcal F$ is convex and the evaluation map is linear, while $U$ is open and convex. Our assumption is exactly $V\cap U=\varnothing$. By finite-dimensional convex separation there exist a nonzero vector $p=(p_1,\ldots,p_m)\in\R^m$ and a real number $\alpha$ such that
\begin{equation}
\label{eq:finiteSeparation}
\sup_{v\in V}\ip{p}{v}\le\alpha\le\inf_{u\in U}\ip{p}{u}.
\end{equation}
Necessarily $p_i\ge0$ for every $i$. Indeed, if $p_i<0$, then letting the $i$th coordinate of $u\in U$ tend to $+\infty$ would force $\inf_{u\in U}\ip{p}{u}=-\infty$, contradicting \eqref{eq:finiteSeparation}. Multiplying $p$ by a positive scalar, we may assume $\sum_i p_i=1$. The right-hand infimum in \eqref{eq:finiteSeparation} is then $\lambda$, and therefore
\begin{equation}
\label{eq:separationExpectation}
\sum_{i=1}^m p_i f(x_i)\le\lambda\qquad(f\in\mathcal F).
\end{equation}

Let $Y$ be the finitely supported random vector defined by $\Pp(Y=x_i)=p_i$. Let $\varepsilon$ be an independent symmetric sign and put $X=\varepsilon Y$. Since $D=-D$ and every $x_i\notin D$, both $x_i$ and $-x_i$ belong to $D^c$, hence
\begin{equation}
\label{eq:XoutsideD}
\Pp(X\in D)=0.
\end{equation}
We claim that $X/\lambda\preceq_{\cx}G$. The point of the even test class is that it still controls arbitrary convex functions once the random vector has been symmetrized.

Let first $h:\R^n\to[0,\infty)$ be finite-valued and convex, with $h(0)=0$ and $\E h(G)<\infty$. Define the even part $h_e(x)=(h(x)+h(-x))/2$. Then $h_e$ is even, nonnegative and convex, $h_e(0)=0$, and, because $G$ is symmetric, $\E h_e(G)=\E h(G)$. Since $X=\varepsilon Y$ with an independent symmetric sign, $\E h(X)=\frac12\E h(Y)+\frac12\E h(-Y)=\E h_e(Y)$. If $\E h(G)>0$, then $h_e/\E h(G)\in\mathcal F$, and \eqref{eq:separationExpectation} yields $\E h(X)\le\lambda\E h(G)$. If $\E h(G)=0$, then $h(G)=0$ almost surely. A finite convex function is continuous on $\R^n$, and Gaussian measure has full support, so $h\equiv0$ and the same inequality is again valid. Since $\lambda>1$, we may write $X/\lambda=\lambda^{-1}X+(1-\lambda^{-1})0$; convexity and $h(0)=0$ therefore give $h(X/\lambda)\le\lambda^{-1}h(X)$. Consequently
\begin{equation}
\label{eq:hConvexOrder}
\E h(X/\lambda)\le\E h(G).
\end{equation}

Now let $\varphi:\R^n\to\R$ be an arbitrary finite-valued convex function. Since its domain is all of $\R^n$, the subdifferential $\partial\varphi(0)$ is nonempty; choose $a\in\partial\varphi(0)$ and set $h(x)=\varphi(x)-\varphi(0)-\ip{a}{x}$. The supporting-hyperplane inequality gives $h\ge0$ and $h(0)=0$. The random vector $X$ is symmetric and hence centered, while $\E G=0$. Thus the affine part has the same expectation under $X/\lambda$ and $G$, and \eqref{eq:hConvexOrder} gives $\E\varphi(X/\lambda)\le\E\varphi(G)$ whenever the latter expectation is finite. If $\E\varphi(G)=+\infty$, there is nothing to prove. Hence $X/\lambda\preceq_{\cx}G$.

By the three-Gaussians theorem there exist standard Gaussian vectors $G_1,G_2,G_3$, not necessarily independent, such that
\begin{equation}
\label{eq:XthreeGaussians}
X/\lambda=G_1+G_2+G_3\qquad\text{almost surely}.
\end{equation}
Using only the marginal laws, $\Pp(G_1,G_2,G_3\in S)\ge3\gamma_n(S)-2>0$. On this event $X=\lambda(G_1+G_2+G_3)\in\lambda(S+S+S)=D$, contradicting \eqref{eq:XoutsideD}. This proves the existence of $f\in\mathcal F$ with $f(x_i)>\lambda$ for all $i$.

For each $i$, choose a subgradient $u_i\in\partial f(x_i)$ and let $\ell_i(x)=f(x_i)+\ip{u_i}{x-x_i}=\ip{u_i}{x}+v_i$. The supporting-hyperplane inequality gives $\ell_i(x)\le f(x)$ for all $x$, and evaluating at $0$ gives $v_i=\ell_i(0)\le f(0)=0$. Moreover $\ell_i(x_i)=f(x_i)>\lambda$. Since $f$ is even, $\ell_i(-x)\le f(-x)=f(x)$ for every $x$. Hence both affine functions $x\mapsto\ip{u_i}{x}+v_i$ and $x\mapsto-\ip{u_i}{x}+v_i$ lie below $f$. Their maximum is $|\ip{u_i}{x}|+v_i$, and therefore the function in \eqref{eq:qFinite} satisfies $0\le q\le f$. It follows that $\E q(G)\le1$, while $q(x_i)\ge\ell_i(x_i)>\lambda$. The barrier is even by construction.
\end{proof}

The proof above is exactly the finite-dimensional analogue of the positive-functional step in the existential argument: the exterior points determine the finite evaluation map, the separating functional is represented by the probability vector $p$, and the resulting probabilistic certificate has finite support. The reason for imposing evenness before performing the separation is also worth emphasizing. If one first constructed an arbitrary barrier and then replaced it by $(q(x)+q(-x))/2$, the value at a prescribed point $x_i$ could fall below the required separation height, because the original construction need not control $-x_i$. Here the symmetry is already built into the test class and into the dual random vector $X=\varepsilon Y$, so each pair $\{x_i,-x_i\}$ is controlled simultaneously. This is what later allows the unit sublevel of the barrier to be fed directly into the Gaussian $S$-inequality.

\medskip\noindent\textit{3. The slope bound and the finite net.} Set $R=3\lambda r+1$. Since $D\subset\clB{3\lambda r}$, one has $D\subset B(0,R-1)$. Let $Z\sim N(0,1)$, define $\beta_R=\E(Z-R)_+$, and put $L=\beta_R^{-1}$. Since a standard Gaussian has positive mass on $(R,\infty)$, $\beta_R>0$.

\begin{lemma}[Uniform slope bound]
\label{lem:slope}
Suppose $x_1,\ldots,x_m\in\clB R$ and $q$ has the form \eqref{eq:qFinite}, with $v_i\le0$, $\ip{u_i}{x_i}+v_i\ge\lambda$ for every $i$, and $\E q(G)\le1$. Then $|u_i|\le L$ for every $i$, and consequently $q$ is globally $L$-Lipschitz.
\end{lemma}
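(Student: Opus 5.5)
The plan is to compare $q$, for each fixed $i$, with a one-dimensional ramp in the direction $u_i$, and read off the bound from $\E q(G)\le1$. We may assume $u_i\neq0$. From $\ip{u_i}{x_i}+v_i\ge\lambda>0$ and $v_i\le0$, the Cauchy--Schwarz inequality gives
\begin{equation}
|u_i|R\ge|u_i||x_i|\ge\ip{u_i}{x_i}\ge\lambda-v_i\ge\lambda>0 .
\end{equation}
This uses the hypothesis $x_i\in\clB R$.

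Next we obtain a pointwise lower bound on $q$. Put $e=u_i/|u_i|$. From \eqref{eq:qFinite} and $v_i\le0$,
\begin{equation}
q(x)\ge\max\bigl(0,|\ip{u_i}{x}|+v_i\bigr)\ge\bigl(|u_i||\ip{e}{x}|+v_i\bigr)_+ .
\end{equation}
The natural threshold for the ramp is $-v_i/|u_i|$. I will show it is at most $R$. Indeed $-v_i\le\ip{u_i}{x_i}-\lambda<|u_i|R$, so $-v_i/|u_i|<R$. Hence
\begin{equation}
|u_i||\ip{e}{x}|+v_i\ge|u_i|\bigl(|\ip{e}{x}|-R\bigr)
\end{equation}
holds for all $x$, and therefore $q(x)\ge|u_i|\,(|\ip{e}{x}|-R)_+$.

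Now take expectations. The variable $Z=\ip{e}{G}$ is standard normal, so
\begin{equation}
1\ge\E q(G)\ge|u_i|\,\E(|Z|-R)_+=2|u_i|\beta_R\ge|u_i|\beta_R .
\end{equation}
This gives $|u_i|\le\beta_R^{-1}=L$. In fact the argument yields the sharper bound $L/2$, but $L$ is what is needed.

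The Lipschitz claim follows directly. Each function $x\mapsto|\ip{u_i}{x}|+v_i$ is $|u_i|$-Lipschitz, the constant $0$ is $0$-Lipschitz, and a pointwise maximum of $L$-Lipschitz functions is $L$-Lipschitz.

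The only delicate point is keeping the threshold $-v_i/|u_i|$ below $R$, so that the ramp compares with $(|Z|-R)_+$ in a way that is uniform in $i$. This is exactly where both $v_i\le0$ and $|x_i|\le R$ are used.
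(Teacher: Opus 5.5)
Your proof is correct and follows the paper's argument: both bound $q(G)$ below by a ramp in the direction $u_i/|u_i|$ whose threshold $-v_i/|u_i|$ is below $R$ (this uses $v_i\le0$ and $|x_i|\le R$), and then use $\E q(G)\le1$. The only difference is that you keep both sides of $|\ip{u_i}{x}|$, which gives the harmless sharper bound $|u_i|\le L/2$; the paper uses only the one-sided piece $(\ip{u_i}{G}+v_i)_+$ together with monotonicity of $t\mapsto\E(Z-t)_+$.
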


\begin{proof}
Fix $i$. The constraint $\ip{u_i}{x_i}+v_i\ge\lambda$ and $v_i\le0$ imply $u_i\ne0$. Write $\sigma_i=|u_i|$, $e_i=u_i/|u_i|$, and $\tau_i=-v_i/\sigma_i$. Then $\tau_i\ge0$, while the constraint at $x_i$ becomes $\sigma_i(\ip{e_i}{x_i}-\tau_i)\ge\lambda$. In particular $\ip{e_i}{x_i}-\tau_i>0$, so $0\le\tau_i<\ip{e_i}{x_i}\le|x_i|\le R$.

The barrier $q$ dominates the positive part of each supporting affine function, and therefore $q(G)\ge(\ip{u_i}{G}+v_i)_+$. Since $\ip{e_i}{G}$ is a standard one-dimensional Gaussian,
\begin{equation}
\label{eq:slopeEstimate}
1\ge\E q(G)\ge\E(\ip{u_i}{G}+v_i)_+=\sigma_i\E(Z-\tau_i)_+\ge\sigma_i\E(Z-R)_+=\sigma_i\beta_R.
\end{equation}
Thus $\sigma_i\le\beta_R^{-1}=L$. The map $x\mapsto|\ip{u_i}{x}|+v_i$ is $|u_i|$-Lipschitz, hence $L$-Lipschitz. The constant function $0$ is also $L$-Lipschitz, and a maximum of finitely many $L$-Lipschitz functions is again $L$-Lipschitz. This proves the claim.
\end{proof}

This estimate is the quantitative point which turns a finite certificate into a global construction. The finite barrier lemma controls only the points at which we impose constraints. The Gaussian expectation bound prevents the corresponding supporting hyperplanes from acquiring arbitrarily large slopes on the fixed ball which contains $D$, so a Euclidean net with a prescribed mesh controls all of the complement at once.

The final half-measure core will be the unit sublevel of the barrier. We use half of the gap between the separation height $\lambda$ and the final level $1$ to pass from the net to the full complement. Set
\begin{equation}
\label{eq:Ldelta}
\delta=\frac{\lambda-1}{2L}=\frac{\lambda-1}{2}\beta_R,
\end{equation}
and put $h=\delta/\sqrt n$. Consider the standard axis-parallel grid of closed cubes $Q_k=hk+[0,h]^n$, where $k\in\mathbb Z^n$. Every such cube has Euclidean diameter exactly $\delta$, and only finitely many grid cubes meet $\clB R$; enumerate them as $Q_1,\ldots,Q_N$.

For each $j$ define
\begin{equation}
\label{eq:rhoj}
\rho_j=\max_{x\in Q_j\cap\clB R}\dist(x,D).
\end{equation}
The maximum exists because $Q_j\cap\clB R$ is compact and the distance function to the closed set $D$ is continuous. Whenever $\rho_j>0$, choose $x_j\in\argmax_{x\in Q_j\cap\clB R}\dist(x,D)$. Then $\dist(x_j,D)=\rho_j>0$, so $x_j\notin D$. Discard the cubes for which $\rho_j=0$ and relabel the retained points as $x_1,\ldots,x_m$.

We verify explicitly that these points form a $\delta$-net of $\clB R\setminus D$. Take $x\in\clB R\setminus D$. Because $D$ is closed, $\dist(x,D)>0$. Choose any grid cube $Q_j$ containing $x$. Then $\rho_j\ge\dist(x,D)>0$, so this cube was retained. Its chosen point $x_j$ lies in the same cube as $x$, and the diameter of the cube is $\delta$; hence $|x-x_j|\le\delta$. This proves the covering property without requiring any regularity of the boundary of $D$.

The compact optimization defining each retained point depends explicitly on the original set. Since $D=\lambda(S+S+S)$,
\begin{equation}
\label{eq:distanceD}
\dist(x,D)=\min_{s_1,s_2,s_3\in S}|x-\lambda(s_1+s_2+s_3)|.
\end{equation}
The minimum is attained because $S^3$ is compact. A crude count of the cubes meeting $\clB R$ gives
\begin{equation}
\label{eq:facetbound}
m\le\left(2+\frac{2R\sqrt n}{\delta}\right)^n=\left(2+\frac{4R\sqrt n}{(\lambda-1)\beta_R}\right)^n.
\end{equation}
We will use this only to make the finiteness of the construction explicit; no attempt is made to optimize it.

\medskip\noindent\textit{4. The convex program and the global barrier.} Apply \cref{lem:finitebarrier} to the net points $x_1,\ldots,x_m$. This suggests the following finite convex optimization problem:
\begin{equation}
\label{eq:convexProgram}
\boxed{\begin{aligned}
\text{minimize}\quad&\Psi_0(u,v):=\E\max\bigl(0,|\ip{u_1}{G}|+v_1,\ldots,|\ip{u_m}{G}|+v_m\bigr),\\
\text{subject to}\quad&v_i\le0,\qquad i=1,\ldots,m,\\
&\ip{u_i}{x_i}+v_i\ge\lambda,\qquad i=1,\ldots,m.
\end{aligned}}
\end{equation}
The variables are $(u_i,v_i)\in\R^n\times\R$, $i=1,\ldots,m$, so the problem has $m(n+1)$ real variables. For every fixed $x$, the map $(u_i,v_i)\mapsto|\ip{u_i}{x}|+v_i$ is convex. A maximum of convex functions is convex, and expectation preserves convexity, so the objective in \eqref{eq:convexProgram} is convex in all the coefficients.

The minimum is attained and is at most $1$. Indeed, \cref{lem:finitebarrier} gives a feasible family of coefficients for which the objective is at most $1$, so the feasible sublevel $\mathcal K=\{(u,v):(u,v)\text{ is feasible and }\Psi_0(u,v)\le1\}$ is nonempty. We show that $\mathcal K$ is compact. First, by \cref{lem:slope}, every point of $\mathcal K$ satisfies $|u_i|\le L$. Since $|x_i|\le R$, the constraint at $x_i$ gives $v_i\ge\lambda-\ip{u_i}{x_i}\ge\lambda-|u_i||x_i|\ge\lambda-LR$, while feasibility gives $v_i\le0$. Thus all coefficients lie in a fixed bounded set.

It remains to check closedness. The linear constraints are closed. Let $(u^{(k)},v^{(k)})\to(u,v)$ in the bounded coefficient region just described. For every fixed $x$, the corresponding maxima converge pointwise. Moreover $|u_i^{(k)}|\le L$ and $\lambda-LR\le v_i^{(k)}\le0$, so the integrands are bounded by $C(1+|G|)$ for a finite constant $C=C(R,L,\lambda)$ independent of $k$. This dominating function is integrable under Gaussian measure. Dominated convergence therefore gives $\Psi_0(u^{(k)},v^{(k)})\to\Psi_0(u,v)$. Hence $\mathcal K$ is closed and bounded in a finite-dimensional space, therefore compact. Since no feasible point with objective larger than $1$ can improve on the feasible point supplied by \cref{lem:finitebarrier}, minimizing over $\mathcal K$ yields a global minimizer of \eqref{eq:convexProgram}.

Choose one such minimizer and set
\begin{equation}
\label{eq:finalq}
q(x)=\max\bigl(0,|\ip{u_1}{x}|+v_1,\ldots,|\ip{u_m}{x}|+v_m\bigr).
\end{equation}
Then $q$ is even and convex, $q(0)=0$, $q(x_i)\ge\lambda$ for every $i$, and $\E q(G)\le1$. By \cref{lem:slope}, $q$ is globally $L$-Lipschitz.

We now pass from the finite net to all of $D^c$. First let $x\in\clB R\setminus D$. Choose a net point $x_i$ with $|x-x_i|\le\delta$. Using the Lipschitz bound and the constraint at $x_i$,
\begin{equation}
\label{eq:barrierInsideBall}
q(x)\ge q(x_i)-L|x-x_i|\ge\lambda-L\delta=\frac{\lambda+1}{2}>1.
\end{equation}
It remains to treat $|x|>R$. Put $y=Rx/|x|$. Then $|y|=R$, and since $D\subset B(0,R-1)$ one has $y\notin D$. The preceding estimate gives $q(y)\ge(\lambda+1)/2$. Write $y=\theta x+(1-\theta)0$ with $\theta=R/|x|\in(0,1)$. By convexity of $q$ and $q(0)=0$, $q(y)\le\theta q(x)$, and therefore $q(x)\ge q(y)/\theta\ge(\lambda+1)/2$. Thus
\begin{equation}
\label{eq:globalBarrier}
x\notin D\quad\Longrightarrow\quad q(x)>1.
\end{equation}
Consequently the unit sublevel
\begin{equation}
\label{eq:Klambda}
K_\lambda=\{x\in\R^n:q(x)\le1\}=\bigcap_{i=1}^m\{x:|\ip{u_i}{x}|\le1-v_i\}
\end{equation}
is a centrally symmetric finite polyhedron contained in $D$. Since $D$ is compact, $K_\lambda$ is bounded and hence is in fact a centrally symmetric polytope. The representation in \eqref{eq:Klambda} uses at most $2m$ half-spaces.

\medskip\noindent\textit{5. Gaussian measure and dilation.}
By Kwapie\'n's mean--median comparison \cite{Kwapien1994}, a median of
$q(G)$ does not exceed $\E q(G)\le1$. Therefore
\begin{equation}
\label{eq:halfmeasure}
\gamma_n(K_\lambda)=\Pp(q(G)\le1)\ge\frac12.
\end{equation}

We now use the $S$-inequality of Lata\l{}a and Oleszkiewicz \cite{LatalaOleszkiewicz1999}. If $K=-K$ is a closed convex set in $\R^n$, $s\ge1$, and $a=\Psi^{-1}(\gamma_n(K))$, then
\begin{equation}
\label{eq:Sineq}
\gamma_n(sK)\ge\Psi(sa).
\end{equation}
Equivalently, among centrally symmetric convex sets with a prescribed Gaussian measure, a symmetric strip has the slowest growth under dilation. Since \eqref{eq:halfmeasure} gives $\Psi^{-1}(\gamma_n(K_\lambda))\ge a_0$, it follows that
\begin{equation}
\label{eq:dilationMeasure}
\gamma_n(sK_\lambda)\ge\Psi(sa_0)\qquad(s\ge1).
\end{equation}

\begin{proof}[Proof of the constructive assertions in \cref{thm:main}]
The construction above proves the first assertion: for every compact centrally symmetric $S$ with $\gamma_n(S)>2/3$ and every $\lambda>1$, it gives $K_\lambda\subset\lambda(S+S+S)$ satisfying \eqref{eq:halfmeasure}. Now let $A$ be as in the second assertion and take the set $S$ in \eqref{eq:defS}. Fix $\Lambda>1$ and $0<p<\Psi(\Lambda a_0)$. By continuity and strict monotonicity of $\Psi$, one can choose $1<\lambda<\Lambda$ so close to $1$ that $p<\Psi((\Lambda/\lambda)a_0)$. Perform the construction with this value of $\lambda$, put $s=\Lambda/\lambda$ and define $C=sK_\lambda$. Scaling the right-hand sides in \eqref{eq:Klambda} gives the half-space representation of $C$, while $C\subset s\lambda(A+A+A)=\Lambda(A+A+A)$ and \eqref{eq:dilationMeasure} gives $\gamma_n(C)\ge\Psi(sa_0)>p$.
\end{proof}
The improvement in the numerical constant occurs entirely in the final Gaussian measure estimate. If one uses only Markov's inequality after constructing a nonnegative barrier $q$ with $\E q(G)\le1$, then
\begin{equation}
\gamma_n\{q\le t\}\ge1-\frac1t,
\end{equation}
so measure $3/4$ requires $t\ge4$. By contrast, Kwapie\'n's mean--median comparison gives a half-measure sublevel already at level
$1$, and the $S$-inequality then yields the full dilation profile \eqref{eq:dilationMeasure}. Thus the improvement comes from preserving symmetry and changing the final measure estimate, not from a finer grid or a better Lipschitz bound.

The input threshold can also be weakened. Suppose $\gamma_n(A)\ge5/6+\eta$ for some $\eta>0$. Then $\gamma_n(A\cap(-A))\ge2/3+2\eta$. Set $r_\eta=\sqrt{n/\eta}$ and $S_\eta=A\cap(-A)\cap\clB{r_\eta}$. Markov's inequality gives $\Pp(|G|>r_\eta)\le\eta$, hence $\gamma_n(S_\eta)\ge2/3+\eta>2/3$. Inspection of the proof shows that this strict inequality is the only measure property of the compact symmetric core used in the finite separation. The entire argument therefore remains valid. We keep the assumption $7/8$ in \cref{thm:main} because it is the hypothesis of Talagrand's constructive problem and gives the convenient radius $\sqrt{24n}$.

\medskip\noindent\textit{6. The endpoint and further consequences of symmetry.} The finite construction uses the positive gap $\lambda-1$, but at the existential level it yields an undilated half-measure core. Choose $\lambda_j\downarrow1$ and let $K_j=K_{\lambda_j}$. For large $j$, say $\lambda_j\le2$, all $K_j$ lie in the fixed compact set $2(S+S+S)$. Each $K_j$ is nonempty because it contains the origin. By Blaschke's selection theorem \cite{Schneider2014}, after passing to a subsequence the sets $K_j$ converge in Hausdorff distance to a nonempty compact centrally symmetric convex set $K$.

We first check the containment. If $x\in K$, choose $x_j\in K_j$ with $x_j\to x$. Write $x_j=\lambda_j(s_{j1}+s_{j2}+s_{j3})$ with $s_{jk}\in S$. Since $S^3$ is compact, a subsequence of the triples converges to $(s_1,s_2,s_3)\in S^3$, and then $x=s_1+s_2+s_3$. Hence $K\subset S+S+S$; for the set in \eqref{eq:defS}, this also gives $K\subset A+A+A$. To pass the measure to the limit, fix $\varepsilon>0$. Hausdorff convergence gives $K_j\subset K+\varepsilon B_2^n$ for all sufficiently large $j$, and therefore
\begin{equation}
\frac12\le\limsup_j\gamma_n(K_j)\le\gamma_n(K+\varepsilon B_2^n).
\end{equation}
As $\varepsilon\downarrow0$, the compact sets $K+\varepsilon B_2^n$ decrease to $K$, so continuity from above gives $\gamma_n(K)\ge1/2$. This proves the endpoint assertion in \cref{thm:main}. Applying the $S$-inequality to $K$ yields $\gamma_n(sK)\ge\Psi(sa_0)$ for every $s\ge1$. Thus the strict inequality in the constructive statement is a feature of the finite construction, not of the existential measure--dilation curve.

The symmetry of the output also controls all central linear sections simultaneously. Let $C=-C$ be any convex set and let $H\subset\R^n$ be a linear subspace. Write $\R^n=H\oplus H^\perp$ and define $F(z)=\int_H\mathbf 1_C(h+z)\,d\gamma_H(h)$ for $z\in H^\perp$. The function $(h,z)\mapsto\mathbf 1_C(h+z)e^{-|h|^2/2}$ is log-concave in the extended sense. By Pr\'ekopa's theorem \cite{Prekopa1973}, $F$ is log-concave on $H^\perp$. Since $C=-C$, the function $F$ is even. An even log-concave function attains its maximum at the origin, so $F(z)\le F(0)$ for all $z$. Fubini's theorem therefore gives
\begin{equation}
\label{eq:sectionInequality}
\gamma_n(C)=\int_{H^\perp}F(z)\,d\gamma_{H^\perp}(z)\le F(0)=\gamma_H(C\cap H).
\end{equation}
Thus a single polytope constructed in \cref{thm:main} has Gaussian measure at least $p$ in every central linear subspace simultaneously. This statement requires no additional optimization after $C$ has been produced.

Finally, because $K_\lambda$ is a bounded finite polyhedron, it is a polytope and may be converted from its half-space representation to a finite vertex representation. If it is described by at most $2m$ half-spaces, a crude combinatorial bound gives at most $\binom{2m}{n}$ vertices. Each vertex $v$ lies in $\lambda(S+S+S)$, and the compact minimization problem $\min_{s_1,s_2,s_3\in S}|v-\lambda(s_1+s_2+s_3)|$ has value zero. Thus the output can, if desired, be supplemented by explicit triples certifying the membership of every vertex in the required triple sum. Since the triple sum need not be convex, these vertex witnesses do not by themselves certify the containment of the whole polytope; that containment is supplied by the global barrier \eqref{eq:globalBarrier}.

\section{Optimality of the measure--dilation tradeoff}

\begin{proof}[Proof of \cref{thm:optimality}]
We begin with the upper bound in \eqref{eq:optimalTradeoff}. Let $A\subset\R^n$ be closed with $\gamma_n(A)\ge7/8$. The endpoint assertion in \cref{thm:main} gives a compact centrally symmetric convex set $K\subset A+A+A$ with $\gamma_n(K)\ge1/2$. Since $q_\varepsilon/a_0\ge1$, the $S$-inequality \eqref{eq:Sineq} gives
\begin{equation}
\label{eq:optimalityUpperEndpoint}
\gamma_n\left(\frac{q_\varepsilon}{a_0}K\right)
\ge\Psi(q_\varepsilon)=1-\varepsilon.
\end{equation}
Moreover $(q_\varepsilon/a_0)K\subset(q_\varepsilon/a_0)(A+A+A)$. This proves
\begin{equation}
\Lambda_*(\varepsilon)\le\frac{q_\varepsilon}{a_0}.
\end{equation}
The endpoint core is obtained by compactness. If instead $\Lambda>q_\varepsilon/a_0$, then
\begin{equation}
1-\varepsilon=\Psi(q_\varepsilon)<\Psi(\Lambda a_0),
\end{equation}
and the constructive assertion in \cref{thm:main}, applied with $p=1-\varepsilon$, produces a centrally symmetric finite polytope $C\subset\Lambda(A+A+A)$ satisfying $\gamma_n(C)\ge1-\varepsilon$.

For the lower bound, work in dimension one and take
\begin{equation}
\label{eq:optimalityInterval}
A=[-b_0,b_0],\qquad b_0=\Phi^{-1}(15/16).
\end{equation}
Then $\gamma_1(A)=\Psi(b_0)=7/8$ and
\begin{equation}
A+A+A=[-3b_0,3b_0].
\end{equation}
Every subset of $\Lambda(A+A+A)$ has Gaussian measure at most $\Psi(3b_0\Lambda)$. Hence the existence of a subset of measure at least $1-\varepsilon$ forces
\begin{equation}
\Psi(3b_0\Lambda)\ge1-\varepsilon=\Psi(q_\varepsilon),
\end{equation}
and therefore $\Lambda\ge q_\varepsilon/(3b_0)$. The definition of $\Lambda_*(\varepsilon)$ restricts to $\Lambda\ge1$, so the lower bound in \eqref{eq:optimalTradeoff} follows. Since
\begin{equation}
\Phi^{-1}(1-u)\asymp\sqrt{\log(1/u)}
\qquad(u\downarrow0),
\end{equation}
the two bounds in \eqref{eq:optimalTradeoff} imply \eqref{eq:optimalTradeoffOrder}.

It remains to prove the exact profile \eqref{eq:optimalSprofile}. Let $K=-K$ be closed and convex with $\gamma_n(K)\ge1/2$. The $S$-inequality gives
\begin{equation}
\gamma_n(sK)\ge\Psi\bigl(s\Psi^{-1}(\gamma_n(K))\bigr)
\ge\Psi(sa_0),
\qquad s\ge1.
\end{equation}
This proves one inequality. The reverse inequality is attained by the symmetric strip
\begin{equation}
\label{eq:optimalityStrip}
K_0=\{x\in\R^n:|x_1|\le a_0\},
\end{equation}
for which $\gamma_n(K_0)=1/2$ and $\gamma_n(sK_0)=\Psi(sa_0)$.

The same value remains the infimum among compact convex bodies and finite polytopes. In dimension one the interval $[-a_0,a_0]$ already gives equality. Suppose $n\ge2$. For $R$ large enough that $\Psi(R)^{n-1}>1/2$, choose $a_R>0$ by
\begin{equation}
\label{eq:optimalityRectangleWidth}
\Psi(a_R)\Psi(R)^{n-1}=\frac12
\end{equation}
and define the centrally symmetric box
\begin{equation}
\label{eq:optimalityRectangles}
K_R=[-a_R,a_R]\times[-R,R]^{n-1}.
\end{equation}
Then $K_R$ is a compact centrally symmetric finite polytope and $\gamma_n(K_R)=1/2$. As $R\to\infty$, one has $a_R\to a_0$, and therefore
\begin{equation}
\gamma_n(sK_R)=\Psi(sa_R)\Psi(sR)^{n-1}\longrightarrow\Psi(sa_0).
\end{equation}
This proves the final assertion and completes the proof.
\end{proof}

Two features of the theorem are worth distinguishing. The one-dimensional interval \eqref{eq:optimalityInterval} proves optimality of the order $\sqrt{\log(1/\varepsilon)}$ for Talagrand's problem itself. The strip and rectangles prove the exact optimality of the function $\Psi(sa_0)$ only after one has reduced the available information to a symmetric convex core of measure $1/2$. Thus the exact threshold $\Phi^{-1}(7/8)/\Phi^{-1}(3/4)$ cannot be improved by a stronger use of the $S$-inequality alone, but it could in principle be improved by proving additional geometric information about the particular finite-barrier cores.

\section{A constructive high-measure ellipsoid}

We now prove \cref{thm:constructiveellipsoid}. The point is that Song's theorem gives the existence of an ellipsoid at the optimal dimension-dependent scale, while the finite half-space description supplied by \cref{thm:main} turns the remaining search into a semidefinite feasibility problem. We record the external input in the precise form needed below. By \cite[Corollary~0.7 and Section~3.2]{Song2026}, there is a universal integer $q_{\mathrm E}\ge1$ such that whenever $K\subset\R^n$ is closed and $\gamma_n(K)\ge2/3$, there exists a centered ellipsoid $E$ satisfying $\gamma_n(E)\ge1/2$ and
\begin{equation}
\label{eq:SongEllipsoid}
\sqrt{\frac{\log n}{n}}\,E\subset K^{(q_{\mathrm E})}.
\end{equation}
Moreover, Song's proof may be normalized so that
\begin{equation}
\label{eq:SongTraceNormalization}
E=E_Q:=\{x\in\R^n:x^TQx\le1\},\qquad Q\succeq0,\qquad \operatorname{Tr}Q=\frac1{10}.
\end{equation}
With this normalization,
\begin{equation}
\E(G^TQG)=\operatorname{Tr}Q=\frac1{10},
\end{equation}
so Markov's inequality gives $\gamma_n(E_Q)\ge9/10$, and in particular $\gamma_n(E_Q)\ge1/2$. Song also remarks that the ellipsoid arising from the argument is not explicitly constructed.

\begin{proof}[Proof of \cref{thm:constructiveellipsoid}]
Set
\begin{equation}
\label{eq:ellipsoidCompactCore}
r_\eta=\sqrt{\frac n\eta},
\qquad
S=A\cap(-A)\cap\clB{r_\eta}.
\end{equation}
Gaussian invariance under reflection and the union bound give
\begin{equation}
\gamma_n(A\cap(-A))\ge2\gamma_n(A)-1
\ge\frac23+2\eta.
\end{equation}
Since $\E|G|^2=n$, Markov's inequality gives $\Pp(|G|>r_\eta)\le\eta$, and hence
\begin{equation}
\label{eq:ellipsoidCoreMeasure}
\gamma_n(S)\ge\frac23+\eta>\frac23.
\end{equation}
The set $S$ is compact and centrally symmetric. Apply the first assertion of \cref{thm:main} with $\lambda=2$. This constructs a centrally symmetric finite polytope
\begin{equation}
\label{eq:ellipsoidK}
K\subset2(S+S+S),
\qquad
\gamma_n(K)\ge\frac12.
\end{equation}
Put $P=2K$. The $S$-inequality gives
\begin{equation}
\label{eq:ellipsoidPMeasure}
\gamma_n(P)\ge\Psi(2a_0)>\frac23,
\end{equation}
while
\begin{equation}
\label{eq:ellipsoidPContainment}
P\subset4(S+S+S)\subset4(A+A+A).
\end{equation}

Write a standard half-space representation of the constructed polytope as
\begin{equation}
\label{eq:CellipsoidHalfspaces}
P=\bigcap_{i=1}^M\{x\in\R^n:\ip{w_i}{x}\le b_i\},
\qquad b_i>0.
\end{equation}
The numbers $b_i$ are positive because $P$ is centrally symmetric, has positive Gaussian measure, and contains the origin in its interior. Apply Song's theorem to $P$. Since $P$ is convex,
\begin{equation}
P^{(q_{\mathrm E})}=q_{\mathrm E}P:
\end{equation}
the inclusion from left to right follows by averaging the summands, and the reverse inclusion follows by taking all summands equal. Therefore there exists an ellipsoid $E_Q$ of the form \eqref{eq:SongTraceNormalization} such that, for
\begin{equation}
\label{eq:ellipsoidrn}
r_n=q_{\mathrm E}^{-1}\sqrt{\frac{\log n}{n}},
\end{equation}
one has $r_nE_Q\subset P$. We now show that a matrix with this property can be recovered from the finitely many coefficients in \eqref{eq:CellipsoidHalfspaces}.

Consider the semidefinite feasibility problem
\begin{equation}
\label{eq:ellipsoidSDP}
\boxed{\begin{aligned}
\text{find}\quad &Q\in\mathbb S^n,\\
\text{subject to}\quad&\operatorname{Tr}Q=\frac1{10},\\
&\begin{pmatrix}Q&r_nw_i\\ r_nw_i^T&b_i^2\end{pmatrix}\succeq0,
\qquad i=1,\ldots,M.
\end{aligned}}
\end{equation}
This is a finite semidefinite feasibility problem with one symmetric matrix variable and $M$ linear matrix inequalities. It is feasible: the matrix supplied by Song's theorem satisfies the trace normalization, and the containment $r_nE_Q\subset P$ implies each of the constraints below, as we now verify.

Let $Q$ be any feasible solution of \eqref{eq:ellipsoidSDP}. Since $P$ is bounded, the normals $w_1,\ldots,w_M$ span $\R^n$; otherwise a nonzero vector orthogonal to all of them would generate a line contained in the polyhedron. We claim that $Q$ is positive definite. Suppose $z\in\ker Q$. For a positive semidefinite block matrix $\left(\begin{smallmatrix}Q&a\\a^T&b\end{smallmatrix}\right)$ one necessarily has $a\perp\ker Q$: otherwise the quadratic form evaluated on $(tz,s)$ with a suitable choice of $t$ would take negative values. Applying this to the $i$th constraint gives $\ip{w_i}{z}=0$ for every $i$. Since the normals span $\R^n$, this forces $z=0$. Thus $Q\succ0$.

For $Q\succ0$, the Schur complement shows that the $i$th block constraint in \eqref{eq:ellipsoidSDP} is equivalent to
\begin{equation}
\label{eq:ellipsoidSchur}
r_n^2w_i^TQ^{-1}w_i\le b_i^2.
\end{equation}
On the other hand, the support function of $E_Q$ in the direction $w_i$ is
\begin{equation}
\label{eq:ellipsoidSupport}
\sup_{x\in E_Q}\ip{w_i}{x}=\sqrt{w_i^TQ^{-1}w_i}.
\end{equation}
Indeed, after writing $x=Q^{-1/2}y$, the constraint becomes $|y|\le1$, and Cauchy--Schwarz gives the displayed maximum, with equality in the direction $Q^{-1/2}w_i$. Combining \eqref{eq:ellipsoidSchur} and \eqref{eq:ellipsoidSupport}, every $x\in E_Q$ satisfies $\ip{w_i}{r_nx}\le b_i$ for every $i$. Hence
\begin{equation}
\label{eq:ellipsoidInsideP}
r_nE_Q\subset P\subset4(A+A+A).
\end{equation}
The trace normalization in \eqref{eq:SongTraceNormalization} and Markov's inequality give $\gamma_n(E_Q)\ge1/2$.

Set
\begin{equation}
\label{eq:ellipsoidScaling}
t_\varepsilon=\frac{q_\varepsilon}{a_0}\ge1,
\qquad
E_\varepsilon=t_\varepsilon E_Q.
\end{equation}
The $S$-inequality applied to the centered ellipsoid $E_Q$ gives
\begin{equation}
\label{eq:ellipsoidHighMeasure}
\gamma_n(E_\varepsilon)
\ge\Psi(t_\varepsilon a_0)
=\Psi(q_\varepsilon)
=1-\varepsilon.
\end{equation}
From \eqref{eq:ellipsoidInsideP},
\begin{equation}
\label{eq:ellipsoidFinalContainment}
\frac{r_n}{4t_\varepsilon}E_\varepsilon
=\frac{a_0}{4q_{\mathrm E}q_\varepsilon}
  \sqrt{\frac{\log n}{n}}\,E_\varepsilon
\subset A+A+A.
\end{equation}
Thus \eqref{eq:ellipsoidMain} holds with $c=a_0/(4q_{\mathrm E})$. Every step before \eqref{eq:ellipsoidSDP} is one of the finite operations in \cref{thm:main}; solving that finite semidefinite system and applying the explicit scalar $t_\varepsilon$ completes the construction.
\end{proof}

Song shows that the order $\sqrt{(\log n)/n}$ cannot be improved by more than a universal constant even when the original set is convex \cite[Section~3.3.1]{Song2026}. The dependence on the target measure is also necessary. Fix $0<\eta<1/6$, put
\begin{equation}
\label{eq:ellipsoidSlabWidth}
b_\eta=\Phi^{-1}\left(\frac{11}{12}+\frac\eta2\right),
\end{equation}
and take the convex slab
\begin{equation}
\label{eq:ellipsoidSlabExample}
A_\eta=[-b_\eta,b_\eta]\times\R^{n-1}.
\end{equation}
Then $\gamma_n(A_\eta)=5/6+\eta$ and
\begin{equation}
A_\eta+A_\eta+A_\eta=[-3b_\eta,3b_\eta]\times\R^{n-1}.
\end{equation}
If $E$ is a centered ellipsoid with $\gamma_n(E)\ge1-\varepsilon$ and $\alpha E\subset A_\eta+A_\eta+A_\eta$, let $h_E(e_1)=\sup_{x\in E}|x_1|$. Since $E\subset\{|x_1|\le h_E(e_1)\}$,
\begin{equation}
1-\varepsilon\le\gamma_n(E)\le\Psi(h_E(e_1)),
\end{equation}
so $h_E(e_1)\ge q_\varepsilon$. The containment in the triple slab gives $\alpha h_E(e_1)\le3b_\eta$, and therefore
\begin{equation}
\alpha\le\frac{3b_\eta}{q_\varepsilon}.
\end{equation}
Thus the factor $q_\varepsilon^{-1}$ in \eqref{eq:ellipsoidMain} is optimal up to a constant depending only on the fixed input margin $\eta$. The two examples establish the two asserted optimality statements separately. The external theorem is used only to prove that the semidefinite system \eqref{eq:ellipsoidSDP} is feasible. Once the half-space description of $P$ has been constructed, the matrix $Q$ is obtained entirely from this finite data and no further access to the original set $A$ is required.

\section{Consequences and examples}

The conclusion of \cref{thm:main} should not be interpreted as saying that three Minkowski additions make a large set convex.  This can fail even for a compact subset of the line.

\medskip\noindent\textit{A nonconvex triple sum.} Let
\begin{equation}
\label{eq:exampleParameters}
     a=\Phi^{-1}\left(\frac{31}{32}\right), \qquad b=\Phi^{-1}\left(\frac{63}{64}\right), \qquad c=\Phi^{-1}\left(\frac{55}{64}+\Phi(-4)\right).
\end{equation}
For the compact set
\begin{equation}
\label{eq:exampleA}
     A=[-4,c]\cup[a,b],
\end{equation}
one has $\gamma_1(A)=7/8$ and
\begin{equation}
\label{eq:exampleTripleSum}
     A+A+A=[-12,c+2b]\cup[3a,3b].
\end{equation}
In particular, $A+A+A$ is not convex.
Indeed, the definition of $c$ gives $\gamma_1([-4,c])=55/64$, while $\gamma_1([a,b])=1/64$. Since $c<a$, the two intervals in \eqref{eq:exampleA} are disjoint and $\gamma_1(A)=7/8$. Put $I=[-4,c]$ and $J=[a,b]$. Then $A+A+A=3I\cup(2I+J)\cup(I+2J)\cup3J$, where $3I=[-12,3c]$, $2I+J=[a-8,2c+b]$, $I+2J=[2a-4,c+2b]$ and $3J=[3a,3b]$. The first three intervals overlap because $a-8<3c$ and $2a-4<2c+b$, and hence
\begin{equation}
\label{eq:firstThreeIntervals}
     3I\cup(2I+J)\cup(I+2J)=[-12,c+2b].
\end{equation}
On the other hand, $c\simeq1.078$, $a\simeq1.863$ and $b\simeq2.154$, so that
\begin{equation}
\label{eq:exampleGap}
     c+2b\simeq5.385<5.588\simeq3a.
\end{equation}
Thus $3J$ is separated from the interval in \eqref{eq:firstThreeIntervals}, which proves \eqref{eq:exampleTripleSum}.

The point of the example is that the convex set appearing in Talagrand's theorem cannot in general be taken to be the whole triple sum. Even at the threshold $\gamma_1(A)=7/8$, and even for a compact set in one dimension, the set $A+A+A$ may retain a genuine gap. The theorem asserts instead that an appropriate dilation of the triple sum contains a large convex subset.

\medskip\noindent\textit{Complete convexification.} The opposite behavior occurs in equally elementary examples. Let $n=2$, put $r=\sqrt{2\log(16/15)}$ and $R=\sqrt{2\log16}$, and consider the compact annulus $A=\{x\in\R^2:r\le |x|\le R\}$. Since $\Pp(|G|\le s)=1-e^{-s^2/2}$ for $G\sim\mathcal N(0,I_2)$, one has $\gamma_2(A)=7/8$. Nevertheless
\begin{equation}
\label{eq:annulusDoubleSum}
     A+A=\clB{2R}.
\end{equation}
Indeed, one inclusion follows from the triangle inequality, while the reverse inclusion follows because the circle $\{x:|x|=R\}$ lies in $A$ and every point of $\clB{2R}$ is the sum of two vectors of norm $R$. Consequently $A+A+A=\clB{3R}$. Thus Minkowski addition may destroy nonconvexity completely, while the preceding example shows that this behavior is not automatic.

\medskip\noindent\textit{Two summands are not sufficient.} Talagrand recalls in \cite{Talagrand2026} that for every $L>0$ one can find a dimension $N$ and a balanced set $A\subset\R^N$ with $\gamma_N(A)\ge3/4$ such that $L(A+A)$ does not contain a large convex set. Thus no universal dilation of the double sum can provide the type of dimension-free convexification appearing here. The occurrence of three Gaussian summands in the theorem of Hua, Song and Tudose therefore reflects a genuine geometric distinction.

\medskip\noindent\textit{Balanced sets and high-measure convexification.}

For balanced sets the symmetrization loss in \eqref{eq:defS} disappears. Recall that $A\subset\R^n$ is balanced if $tx\in A$ whenever $x\in A$ and $|t|\le1$. In particular $A=-A$ and $0\in A$. If $A$ is closed, balanced and $\gamma_n(A)\ge3/4$, then $S=A\cap\clB{\sqrt{24n}}$ is compact and symmetric, and the same Markov estimate as before gives $\gamma_n(S)\ge3/4-1/24=17/24>2/3$. The proof of the finite symmetric barrier uses the compact core only through compactness, symmetry and the strict inequality $\gamma_n(S)>2/3$. Hence the entire construction of \cref{thm:main} applies verbatim to balanced sets without intersecting with $-A$.

\begin{theorem}[Balanced sets]
\label{thm:balanced}
Let $A\subset\R^n$ be closed and balanced with $\gamma_n(A)\ge3/4$.
\begin{enumerate}[label=\textup{(\roman*)},leftmargin=2.2em]
\item One can construct a centrally symmetric finite polytope $C\subset A^{(6)}$ with $\gamma_n(C)\ge3/4$.
\item For every integer $m\ge2$ and every $p<\Psi(ma_0)$ one can construct a centrally symmetric finite polytope $C\subset A^{(3m)}$ with $\gamma_n(C)\ge p$.
\item If $N(\varepsilon)$ is the least universal number of summands sufficient to guarantee a convex subset of Gaussian measure $1-\varepsilon$, then $N(\varepsilon)=\Theta(\sqrt{\log(1/\varepsilon)})$ as $\varepsilon\downarrow0$.
\end{enumerate}
\end{theorem}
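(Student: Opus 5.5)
The plan is to run the construction of \cref{thm:main} on a compact symmetric core of $A$, using a different containment at the end. For $A$ closed and balanced with $\gamma_n(A)\ge3/4$, put $S=A\cap\clB{\sqrt{24n}}$. As noted just before the statement, $S$ is compact and centrally symmetric with $\gamma_n(S)\ge17/24>2/3$. The first assertion of \cref{thm:main}, applied to $S$, then gives for every $\lambda>1$ a centrally symmetric finite polytope $K_\lambda\subset\lambda(S+S+S)$ with $\gamma_n(K_\lambda)\ge1/2$. The inequality \eqref{eq:dilationMeasure}, coming from the $S$-inequality, gives $\gamma_n(sK_\lambda)\ge\Psi(sa_0)$ for $s\ge1$.

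The key containment needs no convexity. For any set $B$ and any positive integer $m$ one has $mB\subset B^{(m)}$, since $mx=x+\cdots+x$. Hence
\begin{equation}
m(S+S+S)=\{ms_1+ms_2+ms_3:s_i\in S\}\subset A^{(m)}+A^{(m)}+A^{(m)}=A^{(3m)}.
\end{equation}
For (ii), fix $m\ge2$ and $p<\Psi(ma_0)$. By continuity of $\Psi$, choose $\lambda\in(1,m)$ so close to $1$ that $p<\Psi((m/\lambda)a_0)$. Set $s=m/\lambda\ge1$ and $C=sK_\lambda$. Then $C\subset m(S+S+S)\subset A^{(3m)}$ and $\gamma_n(C)\ge\Psi(sa_0)>p$. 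The half-space description of $C$ is obtained by rescaling \eqref{eq:Klambda}, so the output remains a finite polytope produced by the same finite optimization steps.

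For (i), take $m=2$. Numerically $2a_0\approx1.349$ and $\Psi(2a_0)\approx0.823>3/4$, so (ii) with $p=3/4$ gives a polytope in $A^{(6)}$. The only care needed here is to certify the numerical inequality $\Psi(2\Phi^{-1}(3/4))>3/4$, which is a routine one-dimensional evaluation.

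For (iii), I interpret $N(\varepsilon)$ as the least $k$ such that, for every $n$ and every closed balanced $A\subset\R^n$ with $\gamma_n(A)\ge3/4$, the set $A^{(k)}$ contains a convex set of measure at least $1-\varepsilon$.
\begin{itemize}
\item For the upper bound, take $m=\lceil q_\varepsilon/a_0\rceil+1$. Then $\Psi(ma_0)>\Psi(q_\varepsilon)=1-\varepsilon$, so (ii) applies (with $m\ge2$ automatic) and gives $N(\varepsilon)\le3m=O(q_\varepsilon)$.
\item For the lower bound, use the balanced interval $A=[-a_0,a_0]\subset\R$, which has $\gamma_1(A)=1/2$. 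To respect the hypothesis $3/4$, I instead take $A=[-b,b]$ with $b=\Phi^{-1}(7/8)$, so that $\Psi(b)=3/4$. Then $A^{(k)}=[-kb,kb]$, and any subset of measure at least $1-\varepsilon$ forces $\Psi(kb)\ge\Psi(q_\varepsilon)$, that is, $k\ge q_\varepsilon/b$.
\end{itemize}
Combined with $q_\varepsilon\asymp\sqrt{\log(1/\varepsilon)}$, these two bounds give $N(\varepsilon)=\Theta(\sqrt{\log(1/\varepsilon)})$.

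I expect no real obstacle. The one conceptual point to check carefully is that the balanced hypothesis is used only to obtain a symmetric core without the loss from intersecting with $-A$. The passage from the dilate $m(S+S+S)$ to the sumset $A^{(3m)}$ relies only on $mx=x+\cdots+x$, so no convexity of $A$ is needed.
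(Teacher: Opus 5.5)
Your proof is correct, and its skeleton matches the paper's. Both run \cref{thm:main} on the core $S=A\cap\clB{\sqrt{24n}}$, turn the dilation into extra summands, and use the interval $[-\Phi^{-1}(7/8),\Phi^{-1}(7/8)]$ for the lower bound in (iii).

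The genuine difference is the absorption step.
\begin{itemize}
\item The paper takes a non-integer dilation $\Lambda<m$. For (i) this is $\Lambda$ strictly between $1.7055\ldots$ and $2$. It then uses balancedness to write $\Lambda a$ as a sum of $m$ copies of $(\Lambda/m)a\in A$.
\item You instead fix the total dilation at exactly the integer $m$, taking $s=m/\lambda$ with $\lambda\downarrow1$. This is allowed because the hypothesis $p<\Psi(ma_0)$ is already strict, so no room below $m$ is needed. You then need only the trivial inclusion $mB\subset B^{(m)}$, which holds for every set.
\end{itemize}

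Your route is slightly more elementary and proves a bit more. Balancedness enters only through $A=-A$, which avoids the loss from intersecting with $-A$ (the construction of \cref{thm:main} itself supplies $0\in S+S+S$). So (i)--(iii) hold verbatim for every closed centrally symmetric $A$ with $\gamma_n(A)\ge3/4$; the lower-bound example in (iii) is balanced, hence also symmetric.

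The paper's route gives nothing extra for the sumset conclusion. For balanced $A$ and $\Lambda\le m$ one has $\Lambda(A+A+A)\subset m(A+A+A)$ anyway. Its only by-product is containment in the smaller dilate $\Lambda(A+A+A)$, which \cref{thm:main} already provides.
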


\begin{proof}
Run the construction in the proof of \cref{thm:main} with the compact balanced core $S=A\cap\clB{\sqrt{24n}}$. For every $\Lambda>1$ and every $p<\Psi(\Lambda a_0)$ it produces a centrally symmetric finite polytope $C\subset\Lambda(A+A+A)$ with $\gamma_n(C)\ge p$.

For (i), the numerical threshold for measure $3/4$ is $\Phi^{-1}(7/8)/a_0=1.705510543\ldots<2$. Choose $\Lambda$ strictly between this number and $2$. Then $3/4<\Psi(\Lambda a_0)$, so the construction gives $C\subset\Lambda(A+A+A)$ with $\gamma_n(C)\ge3/4$. It remains only to absorb the scalar dilation into additional Minkowski summands. If $a\in A$, balancedness and $\Lambda/2<1$ imply $(\Lambda/2)a\in A$. Therefore $\Lambda a=(\Lambda/2)a+(\Lambda/2)a\in A+A$. Applying this separately to each of the three terms in $A+A+A$ gives $\Lambda(A+A+A)\subset A^{(6)}$.

For (ii), fix an integer $m\ge2$ and a number $p<\Psi(ma_0)$. By continuity of $\Psi$, choose $\Lambda<m$ sufficiently close to $m$ that $p<\Psi(\Lambda a_0)$. Construct $C\subset\Lambda(A+A+A)$ with $\gamma_n(C)\ge p$. Since $\Lambda/m<1$, balancedness gives $(\Lambda/m)a\in A$ for every $a\in A$, and $\Lambda a$ is the sum of $m$ copies of this point. Hence $\Lambda A\subset A^{(m)}$, and consequently $\Lambda(A+A+A)\subset A^{(3m)}$.

For the upper bound in (iii), let $m_\varepsilon$ be the least integer $m\ge2$ satisfying $m a_0>\Phi^{-1}(1-\varepsilon/2)$. Then $1-\varepsilon<\Psi(m_\varepsilon a_0)$, and part (ii) produces a convex polytope of measure at least $1-\varepsilon$ inside $A^{(3m_\varepsilon)}$. Since $\Phi^{-1}(1-u)\asymp\sqrt{\log(1/u)}$ as $u\downarrow0$, this gives $N(\varepsilon)=O(\sqrt{\log(1/\varepsilon)})$.

For the converse, it is enough to work in dimension one. Let $b=\Phi^{-1}(7/8)$ and $A=[-b,b]$. Then $A$ is balanced and $\gamma_1(A)=3/4$, while $A^{(N)}=[-Nb,Nb]$. Every convex subset of this interval is again an interval contained in $[-Nb,Nb]$, so its Gaussian measure is at most $\Psi(Nb)$. If such a subset has measure at least $1-\varepsilon$, then $Nb\ge\Phi^{-1}(1-\varepsilon/2)$, and therefore
\begin{equation}
N\ge\frac{\Phi^{-1}(1-\varepsilon/2)}{\Phi^{-1}(7/8)}=\Omega\bigl(\sqrt{\log(1/\varepsilon)}\bigr).
\end{equation}
Together with the upper bound this proves (iii).
\end{proof}

The hierarchy is explicit. With $6$, $9$, $12$ and $15$ summands one obtains every Gaussian measure below $\Psi(2a_0)$, $\Psi(3a_0)$, $\Psi(4a_0)$ and $\Psi(5a_0)$, respectively; these numbers are approximately $0.8226$, $0.9569$, $0.9930$ and $0.99925$. For a prescribed $0<\varepsilon<1/2$, the choice $m_\varepsilon=\lfloor\Phi^{-1}(1-\varepsilon/2)/a_0\rfloor+1$ gives the explicit upper bound $N(\varepsilon)\le3m_\varepsilon$, while the one-dimensional argument above gives $N(\varepsilon)\ge\Phi^{-1}(1-\varepsilon/2)/\Phi^{-1}(7/8)$.

The six-summand conclusion does not use the universal number of Gaussian summands in the subgaussian representation theorem of Hua--Song--Tudose. It is a direct consequence of the three-Gaussians convex-order theorem, the symmetric finite construction, and the ability of a balanced set to absorb any scalar dilation smaller than $2$ into two copies. The following independent construction is numerically weaker, but we retain it because it establishes a different point: the finite-barrier method is not specific to convex order, and it can be run with a restricted class of test functions which detects another probabilistic structure.

\medskip\noindent\textit{An independent finitization through subgaussian tests.}
For a set $A\subset\R^n$ and an integer $m\ge1$, write $A^{(m)}=A+\cdots+A$, with $m$ summands. Recall that $A$ is balanced if $tx\in A$ whenever $x\in A$ and $|t|\le1$. We shall use the following consequence of \cite{HuaSongTudose2026}. There is a universal integer $Q$ such that every centered $1$-subgaussian random vector $Y$ in $\R^n$ can be written as
\begin{equation}
\label{eq:Qgaussians}
Y=G_1+\cdots+G_Q
\end{equation}
in distribution, where each $G_j$ is standard Gaussian. Here $1$-subgaussian means that
\begin{equation}
\label{eq:subgaussiandef}
\Pp(|\ip{Y}{\theta}|\ge u)\le2e^{-u^2/2} \qquad(\theta\in S^{n-1},\ u>0).
\end{equation}

\medskip \noindent \textit{1. A compact large set inside finitely many sums of $A$.} Let $A\subset\R^n$ be closed and balanced with $\gamma_n(A)\ge3/4$. By inner regularity choose a compact set $K\subset A$ with $\gamma_n(K)>2/3$, and let $A_0=\{tx:x\in K,\ |t|\le1\}$. Then $A_0$ is compact, balanced, contained in $A$, and $\gamma_n(A_0)>2/3$.

We first record that
\begin{equation}
\label{eq:halfballA0}
\frac12B_2^n\subset A_0+A_0.
\end{equation}
Indeed, let $|h|\le1/2$. The total variation distance between $N(0,I_n)$ and $N(h,I_n)$ is at most $|h|/2\le1/4$, and therefore $\gamma_n(h-A_0)\ge\gamma_n(A_0)-1/4>5/12$. It follows that $\gamma_n(A_0\cap(h-A_0))>2/3+5/12-1=1/12$. Thus there are $x,y\in A_0$ with $h=x+y$, which proves \eqref{eq:halfballA0}.

Let
\begin{equation}
\label{eq:tQ}
t_Q=\Phi^{-1}\left(1-\frac1{8Q}\right)-\Phi^{-1}\left(\frac23\right), \qquad k=\lceil2t_Q\rceil, \qquad m_0=2k+1.
\end{equation}
The Gaussian isoperimetric inequality \cite{SudakovTsirelson1974,Borell1975} gives
\begin{equation}
\gamma_n(B+tB_2^n) \ge \Phi\!\left(\Phi^{-1}(\gamma_n(B))+t\right) \qquad(t\ge0)
\end{equation}
for every measurable $B\subset\R^n$. Applying this with $B=A_0$ and using $\gamma_n(A_0)>2/3$ and the definition of $t_Q$ gives
\begin{equation}
\label{eq:Ssubgaussianmeasure}
\gamma_n(A_0+t_QB_2^n)\ge1-\frac1{8Q}.
\end{equation}
By \eqref{eq:halfballA0}, one has $(k/2)B_2^n\subset A_0^{(2k)}$, and since $t_Q\le k/2$,
\begin{equation}
\label{eq:SinsideA}
S:=A_0+t_QB_2^n\subset A_0^{(m_0)}\subset A^{(m_0)}.
\end{equation}
The set $S$ is compact and balanced. Fix from now on the barrier height $B=2$ and define
\begin{equation}
\label{eq:Dsubgaussian}
D=2\sqrt2\,S^{(Q)}.
\end{equation}
Since $S$ is balanced and $2\sqrt2<3$, one has $2\sqrt2 S\subset S^{(3)}$, and consequently
\begin{equation}
\label{eq:DsubgaussianinsideA}
D\subset S^{(3Q)}\subset A^{(N_0)}, \qquad N_0=3Qm_0.
\end{equation}
Thus $N_0$ is universal.

\medskip \noindent \textit{2. The subgaussian test class.} For $\theta\in S^{n-1}$ and $s>0$, define
\begin{equation}
\label{eq:psidef}
\psi_{\theta,s}(x)=\frac{\cosh\left(\frac{s}{2}\ip{\theta}{x}\right)-1}{e^{s^2/2}-1}.
\end{equation}
Each $\psi_{\theta,s}$ is even, nonnegative and convex, and $\psi_{\theta,s}(0)=0$. If $G$ is standard Gaussian, then
\begin{equation}
\label{eq:psiGaussian}
\E\psi_{\theta,s}(G)=\frac{e^{s^2/8}-1}{e^{s^2/2}-1}\le\frac14.
\end{equation}
Indeed, putting $u=s^2/2$, convexity of the exponential gives $e^{u/4}\le3/4+(1/4)e^u$.

The normalization in \eqref{eq:psidef} is chosen so that a uniform bound on these tests implies subgaussianity after a fixed scaling. Indeed, suppose that $X$ is symmetric and
\begin{equation}
\label{eq:psiBoundTwo}
\E\psi_{\theta,s}(X)\le2 \qquad(\theta\in S^{n-1},\ s>0).
\end{equation}
Then $\E\cosh((s/2)\ip{\theta}{X})\le1+2(e^{s^2/2}-1)\le e^{s^2}$. Let $Y=X/(2\sqrt2)$. Replacing $s$ by $u/\sqrt2$ gives $\E\cosh(u\ip{\theta}{Y})\le e^{u^2/2}$, and therefore, for $r,u>0$,
\begin{equation}
\Pp(|\ip{\theta}{Y}|\ge r)\le\frac{\E\cosh(u\ip{\theta}{Y})}{\cosh(ur)}\le2e^{u^2/2-ur}.
\end{equation}
Taking $u=r$ gives \eqref{eq:subgaussiandef}, while symmetry gives $\E Y=0$. Thus $X/(2\sqrt2)$ is centered $1$-subgaussian.

Let $\mathcal H=\operatorname{conv}_{\mathrm{fin}}\{\psi_{\theta,s}:\theta\in S^{n-1},\ s>0\}$ be the finite convex hull of the functions $\psi_{\theta,s}$. Every $f\in\mathcal H$ is even, nonnegative and convex, $f(0)=0$, and by \eqref{eq:psiGaussian}, $\E f(G)\le1/4$.

\begin{lemma}[Finite subgaussian barrier]
\label{lem:subgaussianbarrier}
Let $x_1,\ldots,x_m\in D^c$. Then there are affine functions $\ell_i(x)=\ip{u_i}{x}+v_i$ with $v_i\le0$ such that
\begin{equation}
\label{eq:qsubgaussian}
q(x)=\max(0,\ell_1(x),\ldots,\ell_m(x))
\end{equation}
satisfies $q(x_i)>2$ for every $i$ and $\E q(G)\le1/4$.
\end{lemma}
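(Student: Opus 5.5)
The plan is to copy the proof of \cref{lem:finitebarrier}, replacing the class $\mathcal F$ by the test class $\mathcal H$ and the convex-order conclusion by a subgaussian one.

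\emph{Finite separation.} I first claim that some $f\in\mathcal H$ satisfies $f(x_i)>2$ for every $i$. Suppose not. Put $V=\{(f(x_1),\ldots,f(x_m)):f\in\mathcal H\}$ and $U=(2,\infty)^m$. The set $V$ is convex because $\mathcal H$ is convex and evaluation is linear, and $V\cap U=\varnothing$ by assumption. Finite-dimensional separation, exactly as in \eqref{eq:finiteSeparation}, gives a probability vector $p$ with
\begin{equation}
\sum_ip_if(x_i)\le2\qquad(f\in\mathcal H).
\end{equation}
Let $\Pp(Y=x_i)=p_i$, let $\varepsilon$ be an independent symmetric sign, and set $X=\varepsilon Y$. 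Each $\psi_{\theta,s}$ is even, so $\E\psi_{\theta,s}(X)=\E\psi_{\theta,s}(Y)\le2$ for all $\theta\in S^{n-1}$ and $s>0$. This is exactly \eqref{eq:psiBoundTwo}, and $X$ is symmetric. The computation following \eqref{eq:psiBoundTwo} then shows that $X/(2\sqrt2)$ is centered and $1$-subgaussian.

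\emph{Contradiction.} By \eqref{eq:Qgaussians}, $X/(2\sqrt2)$ has the same law as $G_1+\cdots+G_Q$, where each $G_j$ is standard Gaussian. By \eqref{eq:Ssubgaussianmeasure}, $\gamma_n(S^c)\le1/(8Q)$, so the union bound gives
\begin{equation}
\Pp(G_1,\ldots,G_Q\in S)\ge1-\tfrac18>0.
\end{equation}
Hence $\Pp\bigl(X\in 2\sqrt2\,S^{(Q)}\bigr)=\Pp(X\in D)>0$. On the other hand, $D=-D$ because $S$ is balanced, and each $x_i\notin D$, so $\pm x_i\notin D$ and $\Pp(X\in D)=0$. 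This contradiction proves the claim. Note that the representation \eqref{eq:Qgaussians} holds only in distribution, which is harmless: the event $\{X\in D\}$ depends only on the law of $X$.

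\emph{Extraction of $q$.} As at the end of \cref{lem:finitebarrier}, choose subgradients $u_i\in\partial f(x_i)$ and set $\ell_i(x)=f(x_i)+\ip{u_i}{x-x_i}=\ip{u_i}{x}+v_i$. Then $v_i=\ell_i(0)\le f(0)=0$ and $\ell_i(x_i)=f(x_i)>2$. Since $\ell_i\le f$ and $f\ge0$, the function $q=\max(0,\ell_1,\ldots,\ell_m)$ satisfies $0\le q\le f$. Therefore $\E q(G)\le\E f(G)\le1/4$ by \eqref{eq:psiGaussian}, and $q(x_i)>2$ for every $i$.

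The only delicate step is the second one: the separating dual variable must produce a genuinely symmetric random vector, so that the test-function bound becomes a subgaussian tail bound for which the $Q$-Gaussian representation applies. The symmetric sign trick handles this, exactly as in the convex-order case. Evenness of the resulting $q$ is not required by the statement, so I would not enforce it.
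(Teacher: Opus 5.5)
Your proposal is correct and follows the paper's proof essentially step for step. You separate the evaluation vectors of $\mathcal H$ from $(2,\infty)^m$, symmetrize the dual certificate with an independent sign to get a centered $1$-subgaussian vector, rule it out using the $Q$-Gaussian representation and the union bound, and then extract $q$ from subgradients of the separating $f$. Your remark that a representation holding only in distribution suffices (because $D$ is compact and $\{X\in D\}$ depends only on the law of $X$) is a small clarification that the paper leaves implicit.
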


\begin{proof}
We first claim that there exists $f\in\mathcal H$ such that $f(x_i)>2$ for every $i$. Otherwise, finite-dimensional separation applied to $\{(f(x_1),\ldots,f(x_m)):f\in\mathcal H\}$ and $(2,\infty)^m$ gives numbers $p_i\ge0$, $\sum_i p_i=1$, such that
\begin{equation}
\label{eq:subgaussianseparation}
\sum_{i=1}^m p_i f(x_i)\le2 \qquad(f\in\mathcal H).
\end{equation}
Let $Y$ take the value $x_i$ with probability $p_i$, let $\varepsilon$ be an independent symmetric sign, and put $X=\varepsilon Y$. Since every $\psi_{\theta,s}$ is even, \eqref{eq:subgaussianseparation} gives $\E\psi_{\theta,s}(X)\le2$ for every $\theta,s$. The preceding calculation shows that $X/(2\sqrt2)$ is centered $1$-subgaussian. Hence \eqref{eq:Qgaussians} gives standard Gaussian vectors $G_1,\ldots,G_Q$ such that $X/(2\sqrt2)=G_1+\cdots+G_Q$. By \eqref{eq:Ssubgaussianmeasure}, $\Pp(G_1,\ldots,G_Q\in S)\ge1-\sum_{j=1}^Q\Pp(G_j\notin S)\ge1-Q/(8Q)=7/8>0$. On this event $X\in2\sqrt2 S^{(Q)}=D$. On the other hand $D$ is symmetric and every $x_i\notin D$, so $\Pp(X\in D)=0$, a contradiction. Thus the required $f$ exists.

For each $i$, choose $u_i\in\partial f(x_i)$ and let $\ell_i(x)=f(x_i)+\ip{u_i}{x-x_i}$. Then $\ell_i\le f$, $\ell_i(x_i)>2$, and $v_i:=\ell_i(0)\le f(0)=0$. Defining $q$ by \eqref{eq:qsubgaussian}, one has $0\le q\le f$, hence $\E q(G)\le1/4$, and $q(x_i)>2$ for every $i$.
\end{proof}

\medskip \noindent \textit{3. The finite construction.} We now repeat the discretization step with the constants appropriate to the subgaussian test class. Since $D$ is compact, choose $R>0$ such that $D\subset B(0,R-1)$. Let $Z\sim N(0,1)$, put $\beta_R=\E(Z-R)_+>0$ and $L=(4\beta_R)^{-1}$. We first check the analogue of the slope estimate. Suppose $q(x)=\max(0,\ip{u_1}{x}+v_1,\ldots,\ip{u_m}{x}+v_m)$ satisfies $v_i\le0$, $\ip{u_i}{x_i}+v_i\ge2$, $x_i\in\clB R$, and $\E q(G)\le1/4$. Fix $i$ and write $\sigma_i=|u_i|$, $e_i=u_i/|u_i|$ and $\tau_i=-v_i/\sigma_i$. The constraint at $x_i$ shows $u_i\ne0$ and $0\le\tau_i<\ip{e_i}{x_i}\le R$. Since $q$ dominates the positive part of the $i$th affine function,
\begin{equation}
\frac14\ge\E q(G)\ge\E(\ip{u_i}{G}+v_i)_+=\sigma_i\E(Z-\tau_i)_+\ge\sigma_i\beta_R.
\end{equation}
Thus $|u_i|\le(4\beta_R)^{-1}=L$ for every affine piece, and the resulting polyhedral function is globally $L$-Lipschitz.

Set $\delta=1/(2L)=2\beta_R$ and $h=\delta/\sqrt n$. As before, consider all axis-parallel grid cubes of side length $h$ which meet $\clB R$. There are only finitely many of them and each has Euclidean diameter $\delta$. For each such cube $Q$ set $\rho_Q=\max_{x\in Q\cap\clB R}\dist(x,D)$. The maximum exists by compactness. If $\rho_Q>0$, choose a maximizing point $x_Q$ and retain the cube; if $\rho_Q=0$, discard it. Relabel the retained points as $x_1,\ldots,x_m$. If $x\in\clB R\setminus D$ and $Q$ is a grid cube containing $x$, then $\rho_Q\ge\dist(x,D)>0$, so $Q$ was retained and its maximizing point satisfies $|x-x_Q|\le\delta$. Hence these points form a finite $\delta$-net of $\clB R\setminus D$. Since $D=2\sqrt2\,S^{(Q)}$, each distance query is again a compact finite-dimensional optimization over $S^Q$.

Apply \cref{lem:subgaussianbarrier} to this finite net and consider the convex program
\begin{equation}
\label{eq:subgaussianprogram}
\begin{aligned}
\text{minimize}\quad&\Theta(u,v):=\E\max(0,\ip{u_1}{G}+v_1,\ldots,\ip{u_m}{G}+v_m),\\
\text{subject to}\quad&v_i\le0,\qquad i=1,\ldots,m,\\
&\ip{u_i}{x_i}+v_i\ge2,\qquad i=1,\ldots,m.
\end{aligned}
\end{equation}
The finite subgaussian barrier lemma provides a feasible point with objective at most $1/4$, so $\inf\Theta\le1/4$. We also need attainment, because the final construction uses actual coefficients rather than an infimizing sequence. Restrict to the feasible sublevel $\mathcal K_{\rm sg}=\{(u,v):\Theta(u,v)\le1/4\}$. The slope estimate above gives $|u_i|\le L$. Since $|x_i|\le R$, the linear constraint gives $v_i\ge2-\ip{u_i}{x_i}\ge2-LR$, while feasibility gives $v_i\le0$. Thus all coefficients lie in a fixed bounded set.

The set $\mathcal K_{\rm sg}$ is closed. The linear constraints are closed, and if $(u^{(k)},v^{(k)})\to(u,v)$ in the bounded coefficient region, then the integrands defining $\Theta$ converge pointwise. The coefficient bounds dominate them by $C(1+|G|)$ for a finite constant $C=C(R,L)$, which is Gaussian integrable. Dominated convergence therefore shows that $\Theta$ is continuous on this region. Hence $\mathcal K_{\rm sg}$ is compact, and the minimum in \eqref{eq:subgaussianprogram} is attained.

Let $(u_i,v_i)$ be a minimizer and let $q$ be the associated polyhedral function. Then $\E q(G)\le1/4$, $q(x_i)\ge2$ at every net point, and $q$ is $L$-Lipschitz. If $x\in\clB R\setminus D$, choose $x_i$ with $|x-x_i|\le\delta$. Then $q(x)\ge q(x_i)-L|x-x_i|\ge2-L\delta=3/2$. For $|x|>R$, put $y=Rx/|x|$. Since $D\subset B(0,R-1)$, $y\notin D$ and the preceding estimate gives $q(y)\ge3/2$. Writing $y=\theta x+(1-\theta)0$ with $\theta=R/|x|$, convexity and $q(0)=0$ yield $q(y)\le\theta q(x)$, and hence $q(x)\ge q(y)/\theta\ge3/2$. We have proved
\begin{equation}
\label{eq:subgaussianglobalbarrier}
x\notin D\quad\Longrightarrow\quad q(x)\ge\frac32.
\end{equation}
Define $C=\{x:q(x)\le1\}$. Then $C$ is a finite polyhedron and \eqref{eq:subgaussianglobalbarrier} gives $C\subset D$. By \eqref{eq:DsubgaussianinsideA}, $D\subset A^{(N_0)}$, so $C\subset A^{(N_0)}$. Finally Markov's inequality, now used with the expectation level $1/4$, gives
\begin{equation}
\gamma_n(C)=1-\Pp(q(G)>1)\ge1-\E q(G)\ge\frac34.
\end{equation}
This completes the independent finite construction based on the subgaussian representation theorem.

The preceding argument proves, independently of \cref{thm:balanced}, that there is a universal integer $N_0$ such that every closed balanced set $A$ with $\gamma_n(A)\ge3/4$ admits a constructible finite polyhedron $C\subset A^{(N_0)}$ with $\gamma_n(C)\ge3/4$. The construction again consists of finitely many compact finite-dimensional optimization problems followed by one finite-dimensional convex minimization problem. More generally, separating at a height $B>1$ instead of $2$ and repeating the same calculation gives, for every $\varepsilon>0$, a universal number of summands and a constructible polyhedron of Gaussian measure at least $1-\varepsilon$. This latter dependence is not competitive with the optimal $\sqrt{\log(1/\varepsilon)}$ estimate in \cref{thm:balanced}; its purpose is to show that the finitization principle works for a genuinely different test class.

\section{The finite construction and further remarks}

We summarize the finite construction. Fix $(\Lambda,p)$ with $0<p<\Psi(\Lambda a_0)$ and choose $\lambda\in(1,\Lambda)$ satisfying $p<\Psi((\Lambda/\lambda)a_0)$. Once $S=A\cap(-A)\cap\clB{\sqrt{24n}}$ has been formed, the operations involving the original set are the compact maximizations which decide which grid cubes meet the complement of $D=\lambda(S+S+S)$. For a cube $Q$ meeting $\clB R$, one computes
\begin{equation}
\label{eq:distancequery}
\rho_Q=\max_{x\in Q\cap\clB R}\dist(x,D)=\max_{x\in Q\cap\clB R}\min_{s_1,s_2,s_3\in S}|x-\lambda(s_1+s_2+s_3)|.
\end{equation}
Both extrema are over compact finite-dimensional sets. If $\rho_Q>0$, one retains a maximizing point $x_Q$. After this finite list of points has been produced, all dependence on $A$ disappears and one solves the convex program \eqref{eq:convexProgram}. The output is the intersection of the finitely many symmetric slabs in \eqref{eq:Klambda}, followed by the scalar dilation $\Lambda/\lambda$.

The crude size bound \eqref{eq:facetbound} makes the finiteness explicit. With $R=3\lambda\sqrt{24n}+1$ and $\beta_R=\E(Z-R)_+$,
\begin{equation}
\label{eq:facetboundexplicit}
m\le\left(2+\frac{4R\sqrt n}{(\lambda-1)\beta_R}\right)^n.
\end{equation}
The convex program has $m(n+1)$ real variables and the output polytope has at most $2m$ facets. This estimate is extremely large because the construction uses a uniform Euclidean grid at the worst global Lipschitz scale. It is recorded only to make the finiteness explicit; the number of compact optimizations and the number of variables depend only on the ambient dimension and the chosen slack.

The half-space representation is convenient for the ellipsoid application, but it is not the only finite output available. One may enumerate the vertices of the bounded polytope and solve a compact feasibility problem over $S^3$ for each vertex. This supplements the finite $H$-representation with a finite $V$-representation and with triple-sum witnesses for the vertices. As noted above, the global barrier, rather than the vertex witnesses alone, proves that the whole polytope lies in the nonconvex triple sum.

For the ellipsoid application, once the coefficients $(w_i,b_i)$ of the polytope are known, the matrix $Q$ is obtained from the semidefinite feasibility problem \eqref{eq:ellipsoidSDP}. Thus the passage from a large Gaussian set to an optimal-scale ellipsoid consists of two finite convex-optimization stages after the compact distance problems involving the original set have been solved.

\medskip\noindent\textit{Further remarks.}

The construction separates three logically different ingredients. The three-Gaussians theorem is used only to exclude a finitely supported convex-order certificate. Gaussian integrability is then used only to bound the slopes of the supporting affine functions, thereby making a finite net possible. Finally, Kwapie\'n's mean--median comparison and the $S$-inequality are applied only after the finite symmetric polytope has already been constructed. This separation is useful because improvements at any one of the three stages would propagate to different aspects of the conclusion.

At the existential level \cref{thm:nonsymmetric} shows, without any symmetry assumption, that $A+A+A$ itself already contains a convex set of Gaussian measure at least $1/2$ as soon as $\gamma_n(A)>2/3$. The finite procedure, however, uses the positive gap $\lambda-1$ in an essential way when it passes from a finite family of exterior constraints to the entire complement. It would be interesting to know whether the endpoint half-measure core can also be obtained by a finite variational procedure without first enlarging the triple sum.

The $S$-inequality is sharp among all symmetric convex sets once only their initial Gaussian measure is fixed. Therefore the numerical constant $\Phi^{-1}(7/8)/\Phi^{-1}(3/4)$ cannot be improved by treating the constructed half-measure polytope as an arbitrary symmetric convex set. A better universal dilation by the same general route would have to use additional information about the specific barriers produced by finite separation, or prove that their unit sublevels have Gaussian measure uniformly larger than $1/2$.

The facet bound is surely far from optimal. The grid ignores both the geometry of $D$ and the information revealed by the successive supporting hyperplanes. An adaptive discretization, a cutting-plane procedure, or a direct separation of the complement might produce a much smaller finite description while preserving the same proof of correctness. The present estimate should therefore be viewed only as a proof of finiteness, not as evidence that the convex core intrinsically requires so many facets.

The independent subgaussian construction points in another direction. The finite-barrier mechanism does not depend on convex order as such: it requires a class of convex tests, a finite separation argument, a probabilistic interpretation of the dual certificate, and a structural theorem which excludes that certificate. Finding other test classes and representation theorems to which the same scheme applies may be useful in problems where an existence argument is presently known only through an infinite-dimensional duality principle.

\section*{Acknowledgments}

The author thanks Michel Talagrand for helpful discussions and comments.

\clearpage

\section*{Statements and Declarations}

\noindent\textbf{Funding.}
The author gratefully acknowledges support from the Chair ``Artificial Intelligence and Quantitative Methods for Finance'' of the Fondation du Risque, in partnership with Jump Solutions France.

\bibliographystyle{abbrvnat}
\bibliography{references}
\end{document}